\renewcommand*\env@matrix[1][*\c@MaxMatrixCols c]{%
  \hskip -\arraycolsep
  \let\@ifnextchar\new@ifnextchar
  \array{#1}}
\newcommand{\Q}{{\mathbb Q}}
\newcommand{\Gal}{{\mathrm{Gal}}}
\newcommand{\Sel}{{\mathrm{Sel}}}
\newcommand{\dimF}{{\mathrm{dim}_{\mathbb{F}_2}}}
\newcommand{\val}{{\mathrm{val}}}
\newcommand{\p}{{ \mathfrak{p} }}
\newcommand{\Prob}{{ \mathrm{Prob} }}
\newcommand{\Res}{{ \mathrm{Res} }}
\newcommand{\QQ}{\mathbb{Q}}
\newcommand{\Cl}{\mathrm{Cl}}
\newcommand{\Disc}{\mathrm{Disc}}
\newcommand{\Aut}{\mathrm{Aut}}
\newcommand{\Avg}{\mathrm{Avg}}
\newcommand{\cO}{\mathcal{O}}
\newcommand{\SF}{\mathcal{S}_4}
\newcommand{\ST}{\mathcal{S}_3}
\newtheorem{thm}{\bf{Theorem}}
\newtheorem{theorem}{\bf{Theorem}}[section]
\newtheorem{cor}[thm]{\bf{Corollary}}
\newtheorem{corollary}[theorem]{\bf{Corollary}}
\newtheorem{proposition}[theorem]{\bf{Proposition}}
\newtheorem{lemma}[theorem]{\bf{Lemma}}
\theoremstyle{definition}
\newtheorem{definition}[theorem]{\bf{Definition}}
\newtheorem{remark}[theorem]{\bf{Remark}}
\DeclareSymbolFont{cyrletters}{OT2}{wncyr}{m}{n}
\DeclareMathSymbol{\Sha}{\mathalpha}{cyrletters}{"58}
\title[Two-torsion subgroups of class groups of cubic fields]{Two-torsion subgroups of class groups of cubic fields} 
\author{Zev Klagsbrun}
\address{Center for Communications Research, 4320 Westerra Court, San Diego, CA 92121}
\email{zdklags@ccrwest.org}
\begin{document}
\bibliographystyle{alpha}

\begin{abstract}
We prove a generalization of a result of Bhargava regarding the average size $\Cl(K)[2]$ as $K$ varies among cubic fields. For a fixed set of rational primes $S$, we obtain a formula for the average size of 
$\Cl(K)/\langle S \rangle[2]$ as $K$ varies among cubic fields with a fixed signature, where $\langle S \rangle$ is the subgroup of $\Cl(K)$ generated by the classes of primes of $K$ above prime in $S$.

As a consequence, we are able to calculate the average sizes of $K_{2n}(\cO_K)[2]$ for $n > 0$ and for the relaxed Selmer group $\Sel_2^S(K)$ as $K$ varies in these same families.
\end{abstract}
\maketitle

\section{Introduction}

In addition to the Davenport-Heilbronn theorem, one of the few results proven concerning the distribution of class groups of number fields is a result of Bhargava in \cite{BhargavaQuartics} and extended by Bhargava and Varma in \cite{BV} which states:
\begin{thm}
\label{thm:DHforCubics}
When ordered by absolute discriminant,
\begin{enumerate}[(i)]
\item the average size of $\Cl(K)[2]$ as $K$ ranges over totally real $\ST$-cubic fields is equal to $5/4$,
\item the average size of $\Cl(K)[2]$ as $K$ ranges over complex $\ST$-cubic fields is equal~to~$3/2$,~and 
\item the average size of $\Cl(K)^+[2]$ as $K$ ranges over totally real $\ST$-cubic fields is equal to $2$. 
\end{enumerate}
\end{thm}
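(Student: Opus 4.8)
The plan is to reduce the three averages to lattice-point counts via Bhargava's parametrization of quartic rings, and to extract each constant from a density computation combined with the Davenport--Heilbronn count of cubic fields. First I would recall the two parametrizations that make this possible. By Delone--Faddeev, cubic rings correspond to $\mathrm{GL}_2(\Z)$-classes of integral binary cubic forms, so an $\ST$-cubic field $K$ with $\cO_K$ maximal is recorded by a binary cubic form $f_K$. By Bhargava's higher composition law for quartic rings, isomorphism classes of quartic rings equipped with a cubic resolvent ring correspond to $\mathrm{GL}_2(\Z)\times\mathrm{GL}_3(\Z)$-classes on the $12$-dimensional lattice $\Z^2\otimes\mathrm{Sym}^2\Z^3$ of pairs of integral ternary quadratic forms, the resolvent map $(A,B)\mapsto\det(Ax-By)$ being $\mathrm{GL}_2$-equivariant and recording the cubic resolvent ring. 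Fixing the resolvent to be $f_K$ thus identifies quartic rings with resolvent $\cO_K$ with the fiber of the resolvent map over $f_K$, taken modulo the residual $\mathrm{GL}_3(\Z)$-action.

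The arithmetic heart is the class-field-theoretic identification, analogous to Gauss composition on binary quadratics but one dimension up: for a cubic field $K$, the orbits with resolvent $f_K$ are in bijection with $\Cl(\cO_K)[2]$ (respectively $\Cl(\cO_K)^+[2]$), with the identity class corresponding to a distinguished reducible orbit and the nontrivial classes to nondegenerate irreducible orbits. I would establish this by realizing a $2$-torsion class through a fractional ideal $I$ with $I^2=(\delta)$ principal and reconstructing the pair of ternary forms from the trace form on $I$, tracking the $2$-adic and archimedean data. The real place determines whether one sees the ordinary or the narrow class group, and it is here that the three cases (i)--(iii) are separated, by the signature of $K$ and by the sign conditions imposed on the real pairs of forms.

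Granting this, the average becomes
\[
\Avg\, |\Cl(K)[2]| \;=\; 1 \;+\; \lim_{X\to\infty}\frac{\#\{\text{nondegenerate maximal orbits},\ |\Disc|\le X\}}{\#\{\ST\text{-cubic fields }K,\ |\Disc(K)|\le X\}},
\]
the summand $1$ being the uniform contribution of the identity class. I would evaluate the numerator by the geometry of numbers: build a fundamental domain for $\mathrm{GL}_2(\R)\times\mathrm{GL}_3(\R)$ acting on the real pairs, intersect the relevant sign component with the region $|\Disc|\le X$, and count lattice points by averaging over the fundamental domain following Davenport. The denominator is the Davenport--Heilbronn count, refined by signature. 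Taking the ratio in each of the three archimedean regimes should produce $1/4$, $1/2$, and $1$, yielding $5/4$, $3/2$, and $2$.

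The main obstacle is twofold and concentrated in the counting step. First, the cuspidal analysis: the fundamental domain has cusps where naive lattice-point counting diverges, and I must show that the irreducible, maximal orbits lying in the cusp are negligible, so the main term comes only from the bounded part of the domain. Second, the sieve to maximal orders: the construction counts all quartic and cubic rings, so passing to $\cO_K$ maximal and to genuine fields requires imposing maximality at every prime $p$ via a $p$-adic density, together with a uniform tail estimate guaranteeing that non-maximal contributions at large $p$ do not disturb the main term. Combining the cusp bound, the uniform sieve, and the careful archimedean bookkeeping that distinguishes the narrow from the ordinary class group is precisely what produces the three distinct constants.
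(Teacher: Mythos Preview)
The paper does not prove this theorem at all: it is stated as a known result of Bhargava and Bhargava--Varma, with citations to \cite{BhargavaQuartics} and \cite{BV}, and serves only as the background statement that the paper sets out to generalize. There is therefore no ``paper's own proof'' to compare against directly.

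That said, your sketch is a faithful outline of Bhargava's original argument in \cite{BhargavaQuartics}: parametrize quartic rings by pairs of ternary quadratic forms, count orbits by geometry of numbers with a cusp analysis, sieve to maximal orders, and read off the constants from the archimedean volumes. This is correct in broad strokes, though the bijection you describe between orbits over a fixed resolvent and $2$-torsion ideal classes is more delicate than you indicate; in practice Bhargava passes through \'etale quartic $\QQ$-algebras and Heilbronn's correspondence rather than building the pair of forms directly from an ideal $I$ with $I^2$ principal.

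It is worth noting that the paper's own machinery, specialized to $S=\emptyset$, gives a different and much shorter route to the same constants. The paper takes Bhargava's field-counting theorems (Theorems~\ref{thm:quarticfldcount} and~\ref{thm:cubicfldcount}, quoted from \cite{BhargavaGeometricSieve}) as black boxes, and its only arithmetic input is Heilbronn's correspondence (Proposition~\ref{prop:DavCor}) between nontrivial elements of $\Cl(K)[2]$ and nowhere-overramified $\SF$-quartic fields with resolvent $K$. One then simply divides the count of such quartics by the count of cubics; the geometry-of-numbers apparatus, the cusp bounds, and the maximality sieve are all absorbed into the cited theorems. Your approach reconstructs the engine; the paper just turns the key.
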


Theorem \ref{thm:DHforCubics} may be thought of as an analogue of the classical Davenport-Heilbronn theorem regarding the average size of the 3-torsion subgroups of class groups of quadratic fields. In~\cite{DHQuotients}, we generalized the Davenport-Heilbronn theorem to quotients of ideal class groups of quadratic fields by the subgroup generated by the classes of primes lying above a fixed set of rational primes $S$. The goal of this work is to do the same for Theorem \ref{thm:DHforCubics}.  

Explicitly: Let $S$ be a finite set of rational primes.
For each cubic field $K$, define $\Cl(K)_S\nobreak:\nobreak=\nobreak\Cl(K)/\langle S_K\rangle$, where $S_K$ is the set of primes of $\cO_K$ lying above the primes in $S$ and $\langle S_K \rangle$ is the subgroup of $\Cl(K)$ generated by the ideal classes of the primes in $S_K$. Define $\Cl(K)^+_S$ similarly using the narrow class group $\Cl(K)^+$ of $K$.

\begin{thm}
\label{thm:DH for quotients of cubics}
When ordered by absolute discriminant,
\begin{enumerate}[(i)]
\item the average size of $\Cl(K)_S[2]$ as $K$ ranges over totally real  $\ST$-cubic fields is equal to
\linebreak
$\displaystyle{
1 + 
\frac{1}{2^{|S|+2}}
\prod_{p \in S} 
\left(
1  
+
\frac{p^2+4}{4(p^2+p+1)}
\right)
}$,
\item the average size of $\Cl(K)_S[2]$ as $K$ ranges over complex  $\ST$-cubic fields is equal to
\linebreak
$\displaystyle{
1 + 
\frac{1}{2^{|S|+1}}
\prod_{p \in S} 
\left(
1  
+
\frac{p^2+4}{4(p^2+p+1)}
\right)
}$, and
\item the average size of $\Cl(K)^+_S[2]$ as $K$ ranges over totally real  $\ST$-cubic fields is equal to
\linebreak
$\displaystyle{
1 + 
\frac{1}{2^{|S|}}
\prod_{p \in S} 
\left(
1  
+
\frac{p^2+4}{4(p^2+p+1)}
\right)
}$.
\end{enumerate}
\end{thm}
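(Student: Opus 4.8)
The plan is to adapt the Bhargava--Varma proof of Theorem \ref{thm:DHforCubics} (see \cite{BhargavaQuartics}, \cite{BV}) by inserting local conditions at the primes of $S$, in exactly the spirit in which \cite{DHQuotients} treats $3$-torsion of quadratic fields. The first step is a class-field-theoretic reduction. For an $\ST$-cubic field $K$, the group $\Cl(K)_S[2]$ has the same order as its dual, which consists of the quadratic characters of $\Cl(K)$ that are trivial on $\langle S_K\rangle$; by Artin reciprocity these correspond to everywhere-unramified quadratic extensions $M/K$ in which every prime of $S_K$ splits completely. By the Heilbronn correspondence, the nontrivial such $M$ (ignoring the $S$-condition) are parametrized by quartic fields $L$ with cubic resolvent $K$ and $\Disc(L)=\Disc(K)$, the signature of $L$ governing the ramification of $M/K$ at the infinite places and thereby distinguishing the ordinary from the narrow class group and encoding whether $K$ is totally real or complex. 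The split-at-$S$ condition on $M$ translates into a purely local splitting condition on $L$ at the primes above $S$, so that
\[
\Avg\,|\Cl(K)_S[2]| \;=\; 1 + \lim_{X\to\infty}
\frac{\#\{\,L : \res(L)=\cO_K,\ \Disc(L)=\Disc(K),\ L \text{ split at } S,\ |\Disc(K)|<X\,\}}
{\#\{\,K : |\Disc(K)|<X\,\}},
\]
each object counted with its appropriate $1/|\Aut|$ weight and the relevant signature constraints.

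Next I would set up the count using Bhargava's parametrization of quartic rings by $\mathrm{GL}_2(\Z)\times \mathrm{SL}_3(\Z)$-orbits on pairs of integral ternary quadratic forms, from which the resolvent cubic ring is read off directly. Counting orbits of bounded discriminant by the geometry of numbers, then sieving to maximal quartic orders (hence fields) and to the discriminant-matching locus, recovers the numerator above; the denominator is the Davenport--Heilbronn count of cubic fields of the prescribed signature. The split-at-$S$ conditions are imposed as congruence conditions on the ternary-quadratic-form data modulo powers of the primes in $S$, and the ratio of the two asymptotics factors as an archimedean density times a product of $p$-adic densities, one for each rational prime.

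The heart of the matter is the local computation. For $p\notin S$ the $p$-adic densities are those already appearing in \cite{BV}, and together with the archimedean factor they reproduce the base constants $2^{-2},2^{-1},2^{0}$ in the three cases (the consistency check being that $S=\varnothing$ must return $5/4$, $3/2$, and $2$). For $p\in S$ one classifies the splitting type of $p$ in $K$ --- governed, in the unramified cases, by the Frobenius conjugacy class in $S_3$ (totally split with density $1/6$, partially split with density $1/2$, inert with density $1/3$), together with the partially ramified contributions --- and, for each type, computes the local mass of quartic data for which all primes of $K$ above $p$ split in $M$, normalized against the total local mass of cubic rings, which is $p^{-2}(p^2+p+1)$; the appearance of $p^2+p+1$ in the denominator of the stated formula is precisely this normalization. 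Summing these weighted local contributions should yield the per-prime factor $\tfrac12\bigl(1+\tfrac{p^2+4}{4(p^2+p+1)}\bigr)$, so that the product over $p\in S$ combined with the archimedean factor $2^{-c}$ ($c=2,1,0$) produces the exponents $|S|+2$, $|S|+1$, $|S|$ and the Euler-type products displayed in the three parts of the statement.

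I expect the main obstacle to be twofold. First, the exact evaluation of the $p$-adic density at $p\in S$: one must track carefully how the completely-split condition on $M/K$ propagates through the quartic parametrization for every splitting and ramification type of $p$, and verify that the weighted sum collapses to the clean expression $\tfrac12\bigl(1+\tfrac{p^2+4}{4(p^2+p+1)}\bigr)$. Second, and more seriously, one needs the uniformity and tail estimates in the style of Bhargava--Shankar--Tsimerman and \cite{BV} that license imposing finitely many local conditions while discarding the contribution of quartic rings of large discriminant and of non-maximal orders; these are what guarantee that the limit exists, that the sieve commutes with the congruence conditions at $S$, and that the resulting product of local factors genuinely computes the average rather than merely its formal Euler product.
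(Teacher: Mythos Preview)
Your proposal is correct and follows the same overall route as the paper: reduce via class field theory and the Heilbronn correspondence to counting nowhere-overramified $\SF$-quartic fields $L$ with $\Res(L)=K$ and a local condition at each $p\in S$, compare this count to the cubic-field count, and evaluate the resulting ratio of local densities.

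Where your execution diverges from the paper is precisely at the two obstacles you flag, and the paper resolves both cleanly. First, the local condition on $L$ at $p\in S$: the paper shows (via the Artin relation $\zeta_L=\zeta\,\zeta_F/\zeta_K$) that every prime of $K$ above $p$ splits in $M/K$ if and only if $L\otimes\QQ_p$ has a component equal to $\QQ_p$, and then a short Galois-theoretic argument shows that in this case necessarily $L\otimes\QQ_p\simeq (K\otimes\QQ_p)\oplus\QQ_p$. This collapses the local density ratio at $p$ to a comparison of $\mu_p(\{R_p\oplus\QQ_p\})$ and $\mu_p(\{R_p\})$ over all cubic \'etale $\QQ_p$-algebras $R_p$, which is just the automorphism-count $|\Aut(R_p)|/|\Aut(R_p\oplus\QQ_p)|=2^{-(r_p-1)}$ summed against the cubic local masses; no case analysis of quartic data is needed, and the per-prime factor $\tfrac12\bigl(1+\tfrac{p^2+4}{4(p^2+p+1)}\bigr)$ drops out in a few lines. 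Second, rather than redoing the geometry of numbers on pairs of ternary quadratic forms, the paper invokes Bhargava's field-counting theorems from \cite{BhargavaGeometricSieve} as black boxes for both cubic and quartic fields with finitely many local conditions, so the uniformity and tail estimates you worry about are already packaged inside those results.
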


Theorem \ref{thm:DH for quotients of cubics} has the following immediate corollary.

\begin{cor}
\label{cor:posprop}
If $S$ is non-empty, then
\begin{enumerate}[(i)]
\item a positive proportion of totally real $\ST$-cubic fields $K$ have $\Cl(K)_S^+[2] = 0$,
\item a positive proportion of totally real $\ST$-cubic fields $K$ have $\Cl(K)_S^+ = \Cl(K)$, and
\item a positive proportion of totally real $\ST$-cubic fields $K$ have $S_K$-units of all possible signatures.
\end{enumerate}
In each case, the proportion of totally real $\ST$-cubic fields having the property claimed is at least $\displaystyle{
1 - 
\frac{1}{2^{|S|}}
\prod_{p \in S} 
\left(
1  
+
\frac{p^2+4}{4(p^2+p+1)}
\right)
}$.
\end{cor}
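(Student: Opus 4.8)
The plan is to derive all three assertions from part (iii) of Theorem~\ref{thm:DH for quotients of cubics} by a single averaging argument, together with the standard exact sequence relating the narrow and ordinary $S$-class groups. Write
$$A := \frac{1}{2^{|S|}}\prod_{p \in S}\left(1 + \frac{p^2+4}{4(p^2+p+1)}\right),$$
so that Theorem~\ref{thm:DH for quotients of cubics}(iii) states that the average of $|\Cl(K)^+_S[2]|$ over totally real $\ST$-cubic fields equals $1 + A$. The first point I would record is that $A < 1$ whenever $S \neq \emptyset$: absorbing the factor $2^{-|S|}$ into the product, each factor equals $\frac{5p^2+4p+8}{8(p^2+p+1)}$, and the inequality $5p^2+4p+8 < 8(p^2+p+1)$ reduces to $3p^2+4p > 0$, which holds for every prime $p$. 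Hence each factor lies in $(0,1)$ and the nonempty product satisfies $0 < A < 1$.

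For (i) I would run a Markov-type averaging argument. Since $\Cl(K)^+_S[2]$ is an $\Ftwo$-vector space, $|\Cl(K)^+_S[2]|$ equals $1$ when the group is trivial and is at least $2$ otherwise. Thus, if $\delta_X$ denotes the proportion of totally real $\ST$-cubic fields of absolute discriminant below $X$ with $\Cl(K)^+_S[2] = 0$, then the corresponding average is at least $2 - \delta_X$. Letting $X \to \infty$ and invoking Theorem~\ref{thm:DH for quotients of cubics}(iii) gives $1 + A \geq 2 - \liminf_X \delta_X$, that is $\liminf_X \delta_X \geq 1 - A$. By the previous paragraph this lower density is strictly positive, which proves (i) together with the claimed bound.

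For (ii) and (iii) I would invoke the exact sequence
$$\cO_{K,S}^{*} \xrightarrow{\ \sigma\ } \{\pm 1\}^{3} \longrightarrow \Cl(K)^+_S \longrightarrow \Cl(K)_S \longrightarrow 0,$$
valid for a totally real cubic field (three real places), where $\sigma$ is the total signature map on the $S$-units $\cO_{K,S}^{*}$. The key observation is that $\operatorname{coker}(\sigma) = \{\pm 1\}^3/\sigma(\cO_{K,S}^{*})$ is an elementary abelian $2$-group which, via this sequence, embeds into $\Cl(K)^+_S$ and hence into $\Cl(K)^+_S[2]$. Consequently the event $\Cl(K)^+_S[2] = 0$ forces $\operatorname{coker}(\sigma) = 0$: the $S$-units then realize every signature, which is (iii), and the natural surjection $\Cl(K)^+_S \to \Cl(K)_S$ becomes an isomorphism, which yields (ii). Since (ii) and (iii) are each implied by the event in (i), they each hold on a set of fields of lower density at least $1 - A$, giving the uniform bound in the corollary.

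The averaging step is routine once Theorem~\ref{thm:DH for quotients of cubics} is granted; the only care needed there is to phrase it in terms of lower densities along the discriminant ordering rather than an honest average, and to use that a nontrivial $\Ftwo$-vector space has size at least $2$. The genuinely substantive point is the identification of the connecting terms in the displayed exact sequence as $2$-torsion groups: once this is in place, the bound established for (i) transfers verbatim to (ii) and (iii), so I expect this structural identification to be the main thing to pin down carefully.
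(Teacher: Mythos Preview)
Your argument is correct and follows essentially the same route as the paper: a Markov-type bound using Theorem~\ref{thm:DH for quotients of cubics}(iii) to get (i), and then the observation that the kernel of $\Cl(K)_S^+ \twoheadrightarrow \Cl(K)_S$ is an elementary abelian $2$-group (hence trivial once $\Cl(K)_S^+[2]=0$) to deduce (ii) and (iii). The only cosmetic differences are that the paper records the explicit lower bound $1-A \ge 5/14$ (attained at $S=\{2\}$) rather than just $A<1$, and that the paper phrases the reduction as ``kernel of order at most two'' and passes through (ii) to reach (iii), whereas you invoke the signature exact sequence directly; your formulation via $\operatorname{coker}(\sigma)\hookrightarrow \Cl(K)_S^+[2]$ is in fact the cleaner justification.
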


The local product
appearing in Theorem \ref{thm:DH for quotients of cubics} is a consequence of the fact that the decomposition type of any prime in $S$ will vary with $K$. By assuming a fixed decomposition type for each prime in $S$, we get a more natural answer that is consistent with Cohen and Lenstra's model for quotients of class groups --- see Remark \ref{rem:CLrem}.

\begin{thm}
\label{thm:DH for quotients of cubics local}
Let $S$ be a set of rational primes. For each $p \in S$, fix a rank 3 $\QQ_p$-algebra $R_p$ and set $r = \sum_{p \in S} (r_p - 1)$, where $r_p$ is the number of irreducible components of $R_p$. When ordered by absolute discriminant,
\begin{enumerate}[(i)]
\item the average size of $\Cl(K)_S[2]$ as $K$ ranges over totally real  $\ST$-cubic fields with \linebreak$K\nobreak\otimes\nobreak\QQ_p\nobreak\simeq\nobreak R_p$ for all $p \in S$ is equal to $1 + 2^{-(r + 2)}$,
\item the average size of $\Cl(K)_S[2]$ as $K$ ranges over complex  $\ST$-cubic fields with \linebreak$K\nobreak\otimes\nobreak\QQ_p\nobreak\simeq\nobreak R_p$ for all $p \in S$ is equal to $1 + 2^{-(r + 1)}$, and
\item the average size of $\Cl(K)^+_S[2]$ as $K$ ranges over totally real  $\ST$-cubic fields  with \linebreak$K\nobreak\otimes\nobreak\QQ_p\nobreak\simeq\nobreak R_p$ for all $p \in S$ is equal to $1 + 2^{-r}$
\end{enumerate}
\end{thm}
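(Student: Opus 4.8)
The plan is to dualize: replacing $2$-torsion by its $\Ftwo$-dual converts the average into a count of unramified quadratic extensions subject to a splitting condition at $S$, which can then be evaluated by the Bhargava--Varma geometry-of-numbers count with local conditions imposed at the primes of $S$.

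\emph{Reduction to counting extensions.} For a finite abelian group $A$ with subgroup $B$ one has $|(A/B)[2]| = |A/(2A+B)| = |\mathrm{Hom}(A/B,\Ftwo)|$. Taking $A=\Cl(K)$ (resp.\ $\Cl(K)^+$) and $B=\langle S_K\rangle$ and passing through the Artin map identifies $|\Cl(K)_S^{(+)}[2]|$ with $1+N_S(K)$, where $N_S(K)$ counts the nontrivial quadratic extensions $M/K$ that are unramified at all finite primes (and at the archimedean primes as well in cases (i),(ii), with no archimedean condition in case (iii)) and in which every prime $\p$ of $K$ above a prime of $S$ splits completely. Thus $\Avg|\Cl(K)_S^{(+)}[2]| = 1+\Avg N_S$, and everything reduces to the conditional average of $N_S$ over fields with $K\otimes\QQ_p\simeq R_p$. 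Writing $\chi$ for the quadratic character cutting out $M$, the splitting condition at $p$ reads $\chi(\p)=0$ for all $\p\mid p$; since $(p)=\prod_{\p\mid p}\p^{e_\p}$ is principal, these values satisfy the single relation $\sum_{\p\mid p}e_\p\,\chi(\p)=0$ in $\Ftwo$, nontrivial because $\sum_{\p\mid p}e_\p f_\p=3$ forces some $e_\p$ odd. Hence $(\chi(\p))_{\p\mid p}$ is confined to an $(r_p-1)$-dimensional $\Ftwo$-space, and requiring it to vanish should cost a factor $2^{-(r_p-1)}$ --- this is the origin of the exponent $r=\sum_{p\in S}(r_p-1)$.

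\emph{Parametrization and sieve.} Following Bhargava and Bhargava--Varma, the pairs $(K,\chi)$ with $\chi$ a nontrivial everywhere-unramified quadratic character are parametrized by quartic rings (or fields) $L$ whose cubic resolvent is $K$ and which satisfy $\Disc(L)=\Disc(K)$, realized inside a space of pairs of ternary quadratic forms carrying a $\mathrm{GL}_2(\Z)\times\mathrm{SL}_3(\Z)$ action. The condition that the primes above $p$ split completely in $M$ becomes a union of $\mathrm{GL}_2(\Z_p)\times\mathrm{SL}_3(\Z_p)$-invariant congruence conditions at $p$, while the archimedean conditions separating (i)--(iii) are exactly those already handled in Theorem \ref{thm:DHforCubics}. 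I then count these quartic objects of bounded discriminant by the geometry of numbers, incorporating the conditions at the finitely many primes of $S$ through a sieve whose error terms are controlled by the known uniformity estimates for counting quartic rings.

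\emph{Local densities and conclusion.} The count factors as an archimedean volume times a product of $p$-adic densities. Fixing $K\otimes\QQ_p\simeq R_p$ pins down the density at each $p\in S$, and I must establish two local facts: first, that the $p$-adic density of unramified quartic extensions, relative to that of cubic fields, is \emph{independent of the local type} $R_p$, so that imposing $R_p$ does not disturb the base average; and second, that adjoining the ``split completely at $p$'' congruence multiplies that density by exactly $2^{-(r_p-1)}$, in accordance with the equidistribution of $(\chi(\p))_{\p\mid p}$ over the hyperplane $\sum_{\p\mid p}e_\p\chi(\p)=0$. Granting these, the away-from-$S$ densities and the archimedean volume reassemble into the base constant of Theorem \ref{thm:DHforCubics}, namely $2^{-2},\,2^{-1},\,2^{0}$ in cases (i),(ii),(iii), and the $S$-factors contribute $\prod_{p\in S}2^{-(r_p-1)}=2^{-r}$, yielding $\Avg N_S=2^{-(r+2)},\,2^{-(r+1)},\,2^{-r}$ respectively. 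The main obstacle is precisely this local density computation: verifying the independence from $R_p$ and the clean factor $2^{-(r_p-1)}$ uniformly across \emph{all} local types --- in particular for ramified $R_p$ and for $p=2$, where the parametrization of unramified quartic extensions and the behaviour of $\Frob_\p$ demand the most careful bookkeeping --- together with ensuring that the sieve error terms remain negligible after the conditions at $S$ have been imposed.
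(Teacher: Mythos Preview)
Your strategy coincides with the paper's: pass through Heilbronn's correspondence to count nowhere-overramified $\SF$-quartic fields with prescribed resolvent, and then invoke Bhargava's field-counting with local conditions. The difference is that what you flag as the ``main obstacle''---the local density computation, with its worries about ramified $R_p$ and $p=2$---is exactly where the paper supplies two short lemmas that dissolve the difficulty uniformly, with no case analysis at all.

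First, the splitting condition admits a clean reformulation on the quartic side: if $L$ is the $\SF$-quartic with resolvent $K$ attached to the unramified quadratic $M/K$, then \emph{every prime of $K$ above $p$ splits in $M$ if and only if $L\otimes\QQ_p$ has a component equal to $\QQ_p$} (the paper proves this via the Artin relation $\zeta_L=\zeta\,\zeta_M/\zeta_K$). Second, once $L\otimes\QQ_p$ has a $\QQ_p$-component, the decomposition group at any prime of the Galois closure above $p$ sits inside some $\ST\subset\SF$; over the Galois closure $\widetilde K$ of $K$ it must also sit in the Klein four group $V=\Gal(N/\widetilde K)$, and $\ST\cap V=1$, forcing $L\otimes\QQ_p\simeq (K\otimes\QQ_p)\oplus\QQ_p=R_p\oplus\QQ_p$. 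So the local condition at $p$ is not a union of congruence classes to be sieved but a \emph{single} local algebra $\widetilde{R_p}:=R_p\oplus\QQ_p$, and the local density ratio collapses to
\[
\frac{\mu_p(\{\widetilde{R_p}\})}{\mu_p(\{R_p\})}=\frac{|\Aut(R_p)|}{|\Aut(\widetilde{R_p})|}=\frac{n!}{(n+1)!}=\frac{1}{n+1},
\]
where $n$ is the number of $\QQ_p$-components of $R_p$; inspection of the three possibilities $r_p=1,2,3$ shows this equals $2^{-(r_p-1)}$. This replaces your equidistribution heuristic on $(\chi(\p))_{\p\mid p}$, renders your ``first local fact'' unnecessary as a separate step, and works identically for all $p$ and all $R_p$, ramified or not.
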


We are able to use Theorem  \ref{thm:DH for quotients of cubics local} to obtain results about $K_{2n}(\cO_K)$ for $n > 0$. When $S = \{2\}$, a theorem of Rognes and Weibel relates $\dimF K_{2n}(\cO_K)$ to $\Cl(K)_S$ when $n \equiv 0,1 \pmod{4}$ and to $\Cl(K)^+_S$ when $n \equiv 2,3 \pmod{4}$ --- see Theorem \ref{thm:RG thm}. As a consequence, we obtain the following result about the average size of $K_{2n}(\cO_K)[2]$ as $K$ varies over cubic fields.

\begin{thm}\label{thm:K2thm}
For $n > 0$, the average size of $K_{2n}(\cO_K)[2]$  as $K$ ranges over totally real (resp.~complex) $\ST$-cubic fields is as follows: 
\begin{center}
\begin{tabular}{c||c|c}
& $\textup{$K$ totally real}$ & $\textup{$K$ complex}$ \\ \hline \hline
$\textup{$n \equiv 0 \pmod{4}$}$ & $59/28$ & $33/14$ \\ \hline
$\textup{$n \equiv 1 \pmod{4}$}$ & $118/7$ & $33/7$ \\ \hline
$\textup{$n \equiv 2 \pmod{4}$, $n \equiv 3 \pmod{4}$}$ & $20/7$ & $33/14$ \\ 
\end{tabular}
\end{center}
\end{thm}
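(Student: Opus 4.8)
The plan is to run the distributional input of Theorem~\ref{thm:DH for quotients of cubics local} through the Rognes--Weibel comparison (Theorem~\ref{thm:RG thm}), always with $S=\{2\}$. First I would write $|K_{2n}(\cO_K)[2]|=2^{\dimF K_{2n}(\cO_K)[2]}$ and use Theorem~\ref{thm:RG thm} to replace $\dimF K_{2n}(\cO_K)[2]$ by $\dimF\Cl(K)_S[2]$ when $n\equiv0,1\pmod4$ and by $\dimF\Cl(K)^+_S[2]$ when $n\equiv2,3\pmod4$, in each case up to an explicit correction term $c(K)$. The point I would extract from the \'etale-cohomological content of Theorem~\ref{thm:RG thm} is that $c(K)$ is an affine function of the number $t(K)$ of primes of $\cO_K$ lying over $2$, with slope $1$ and an intercept that depends only on $n\bmod4$ and on the signature of $K$; the infinite places enter only through this intercept.

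Because $c(K)$ depends on $K$ only through $t(K)$ and the signature, the average cannot be read off from Theorem~\ref{thm:DH for quotients of cubics} directly, and I would instead condition on the isomorphism class of $R_2=K\otimes\QQ_2$. For a fixed signature and a fixed residue of $n$ modulo $4$, the average of $|K_{2n}(\cO_K)[2]|$ becomes a finite sum $\sum_{R_2}\rho(R_2)\,2^{c(R_2)}A(R_2)$, where $\rho(R_2)$ is the Davenport--Heilbronn local density of the splitting type $R_2$ (the same weights that take Theorem~\ref{thm:DH for quotients of cubics local} to Theorem~\ref{thm:DH for quotients of cubics}), $c(R_2)$ is the correction above, and $A(R_2)$ is the conditional average of $|\Cl(K)_S[2]|$ or $|\Cl(K)^+_S[2]|$ furnished by Theorem~\ref{thm:DH for quotients of cubics local}. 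Since $c$ and $A$ see $R_2$ only through $t$ (the number of irreducible factors of $R_2$, equivalently $r+1$ in the notation of Theorem~\ref{thm:DH for quotients of cubics local}), the sum reorganizes into one over $t\in\{1,2,3\}$.

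Two features then make the bookkeeping collapse. First, for complex cubic fields $-1$ is a unit that is negative at the unique real place, so the signs of units already surject onto $\{\pm1\}$ and $\Cl(K)^+=\Cl(K)$; thus the narrow and ordinary cases coincide, which is why the $n\equiv0$ and $n\equiv2,3$ columns agree for complex $K$. Second, the conditional averages from Theorem~\ref{thm:DH for quotients of cubics local} all have the shape $1+2^{-(t+s)}$ for an integer offset $s$ determined by the signature and by ordinary-versus-narrow, while $c(R_2)=t+m$ for an integer intercept $m$; using $\sum_t\rho_t=1$, each entry collapses to the compact form $2^{m}\big(M+2^{-s}\big)$, where $M=\Avg\,2^{t(K)}$ is the average of $2^{t(K)}$ over the family. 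One computes $M=26/7$ from the local densities of the splitting of $2$, and then reads off $(m,s)$ for each of the six cases to obtain the table.

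The main obstacle is the Rognes--Weibel bookkeeping itself: correctly isolating the intercept of $c$ and, above all, the four-fold periodicity governing the switch between $\Cl(K)_S$ and $\Cl(K)^+_S$. A naive reduction of $K_{2n}(\cO_K)\{2\}=H^2_{et}(\cO_K[1/2],\ZZ_2(n+1))$ modulo $2$ only exhibits two-fold periodicity and always produces the ordinary class group; recovering the narrow class group for $n\equiv2,3\pmod4$ requires tracking the $\ZZ_2(n+1)$-twisted contributions of the real places to $H^2_{et}$ and $H^3_{et}$, and it is this step that must be handled with care before the substitution into $2^{m}(M+2^{-s})$ can be made.
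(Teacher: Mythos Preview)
Your proposal is correct and follows essentially the same route as the paper: quote the Rognes--Weibel formula (Theorem~\ref{thm:RG thm}) as a black box, condition on the number $t$ of primes above $2$, feed in the conditional averages from Theorem~\ref{thm:DH for quotients of cubics local}, and then average over $t$ using the Davenport--Heilbronn local densities to obtain $M=\Avg\,2^{t(K)}=26/7$; this is exactly the content of Proposition~\ref{prop:classavg} and Lemma~\ref{lem:avglemK} specialized to $S=\{2\}$. Your closed form $2^{m}(M+2^{-s})$ is just a tidy repackaging of that computation, and your worry about the \'etale bookkeeping is unnecessary here since the paper simply imports Theorem~\ref{thm:RG thm} from \cite{RG} rather than rederiving it.
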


\begin{remark}
Theorem \ref{thm:K2thm} is an analogue of Theorem 1.2 in the author's joint work with Jordan, Poonen, Skinner, and Zaytman which proves a similar result about the average size of $K_{2n}(\cO_K)[3]$ as $K$ varies over quadratic fields~\cite{JKPSZ}. 
\end{remark}

\begin{remark}
We restrict our attention to even indexed $K$-groups, since for $n$ odd, $K_n(\cO_K)[2]$ is entirely determined by the residue class of $n \pmod{8}$ and the number of real places of $K$ (see Theorem 0.7 in~\cite{RG}). 
\end{remark}

We also obtain distribution results about $\Sel_2^S(K)$, the relaxed $2$-Selmer group of $K$ (defined in Section~\ref{sec:Selmer Groups}).
\pagebreak

\begin{thm}
\label{thm:DH quotients for Sel_2^S}
When ordered by absolute discriminant,
\begin{enumerate}[(i)]
\item the average size of $\Sel_2^S(K)$ as $K$ ranges over totally real $\ST$-cubic fields is equal to 
\linebreak
$\displaystyle{2^{|S|+1} + 2^{|S|+3} \prod_{p \in S} \left (2 - \frac{1}{p^2 + p + 1} \right )}$
and
\item the average size of $\Sel_2^S(K)$ as $K$ ranges over complex $\ST$-cubic fields is equal to
\linebreak
$\displaystyle{2^{|S|+1} + 2^{|S|+2} \prod_{p \in S} \left (2 - \frac{1}{p^2 + p + 1} \right )}$. 
\end{enumerate}
\end{thm}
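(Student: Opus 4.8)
The plan is to reduce the statement to the class-group averages already recorded in Theorem \ref{thm:DH for quotients of cubics local}, by writing $|\Sel_2^S(K)|$ as $|\Cl(K)_S[2]|$ times a unit factor that is constant on families of fixed decomposition type. First I would unwind the definition of the relaxed Selmer group from Section \ref{sec:Selmer Groups}: writing $S_K$ for the primes of $\cO_K$ above $S$ and $\cO_{K,S}^\ast$ for the $S$-units, the elements of $\Sel_2^S(K)$ are the classes $x \in K^\ast/(K^\ast)^2$ with $\ord_v(x)$ even at every finite place $v \notin S_K$ and no archimedean condition imposed. Sending such an $x$ to the class of the ideal $\mathfrak{a}$ with $\mathfrak{a}^2 = (x)$ away from $S_K$ gives a surjection onto $\Cl(K)_S[2]$ whose kernel is exactly $\cO_{K,S}^\ast/(\cO_{K,S}^\ast)^2$, so that
\[
1 \longrightarrow \cO_{K,S}^\ast/(\cO_{K,S}^\ast)^2 \longrightarrow \Sel_2^S(K) \longrightarrow \Cl(K)_S[2] \longrightarrow 1 .
\]
By Dirichlet's $S$-unit theorem, and since the only roots of unity in a cubic field are $\pm 1$, the group $\cO_{K,S}^\ast/(\cO_{K,S}^\ast)^2$ has $\Ftwo$-dimension $r_1 + r_2 + |S_K|$, where $r_1,r_2$ count the real and complex places. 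Hence $|\Sel_2^S(K)| = 2^{\,r_1 + r_2 + |S_K|}\,|\Cl(K)_S[2]|$, with $r_1 + r_2 = 3$ in the totally real case and $r_1 + r_2 = 2$ in the complex case.

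The point of this identity is that, once the decomposition type $R_p = K \otimes \QQ_p$ is fixed at every $p \in S$, the quantity $|S_K| = \sum_{p \in S} r_p$ is constant, where $r_p$ is the number of primes above $p$; thus $|S_K| = r + |S|$ for $r = \sum_{p\in S}(r_p - 1)$ as in Theorem \ref{thm:DH for quotients of cubics local}. The unit factor $2^{\,r_1+r_2+|S_K|}$ is then constant on each fixed-type family and can be pulled out of the average. Applying Theorem \ref{thm:DH for quotients of cubics local} to the remaining factor $\Avg|\Cl(K)_S[2]|$ gives, for totally real $K$ of fixed type,
\[
\Avg\,|\Sel_2^S(K)| = 2^{\,3 + r + |S|}\bigl(1 + 2^{-(r+2)}\bigr) = 2^{|S|+1} + 2^{|S|+r+3},
\]
and similarly $2^{|S|+1} + 2^{|S|+r+2}$ in the complex case.

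It remains to average over decomposition types. Since the local behaviours at distinct primes of $S$ are independent and there are only finitely many rank $3$ $\QQ_p$-algebras, the unconditional average is the finite, density-weighted sum of the fixed-type averages, with weights given by the same Davenport--Heilbronn local densities $\rho_p(R_p)$ used to pass from Theorem \ref{thm:DH for quotients of cubics local} to Theorem \ref{thm:DH for quotients of cubics}. The constant summand $2^{|S|+1}$ survives because the densities sum to $1$, while the second summand contributes $2^{|S|+3}\,\Avg(2^r)$ (resp.\ $2^{|S|+2}\,\Avg(2^r)$), where $\Avg(2^r) = \prod_{p \in S}\bigl(\sum_{R_p}\rho_p(R_p)\,2^{\,r_p - 1}\bigr)$ by independence. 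The computation therefore reduces to the purely local identity
\[
\sum_{R_p}\rho_p(R_p)\,2^{\,r_p-1} = 2 - \frac{1}{p^2+p+1},
\]
which I would verify by substituting the known densities of the (five, for $p>3$, with the usual modifications at $p=2,3$) splitting types into the left-hand side. Plugging this in yields the two stated formulas.

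I expect the main obstacle to be this last local moment computation. Note that it is the \emph{positive} moment $\sum_{R_p}\rho_p(R_p)2^{\,r_p-1}$ that enters here, whereas passing from Theorem \ref{thm:DH for quotients of cubics local} to Theorem \ref{thm:DH for quotients of cubics} uses the \emph{negative} moment $\sum_{R_p}\rho_p(R_p)2^{-(r_p-1)}$; so its value cannot simply be read off from Theorem \ref{thm:DH for quotients of cubics} and must be recomputed from the densities themselves. The only other point requiring care is the insistence on working with the fixed-type Theorem \ref{thm:DH for quotients of cubics local} rather than Theorem \ref{thm:DH for quotients of cubics}: because the unit factor $2^{|S_K|}$ is correlated with $|\Cl(K)_S[2]|$ across varying decomposition types, it may be factored out of the average only after the type has been fixed.
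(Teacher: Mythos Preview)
Your proposal is correct and follows essentially the same approach as the paper: the paper likewise uses the exact sequence \eqref{eq:Selmer sequence} to write $|\Sel_2^S(K)| = 2^{\nu_S(K)+c}\,|\Cl(K)_S[2]|$, conditions on the local data so as to invoke Theorem~\ref{thm:DH for quotients of cubics local}, and then recombines via exactly the local moment $\sum_{R_p}\rho_p(R_p)2^{r_p-1}=2-\tfrac{1}{p^2+p+1}$ you identify (this is Lemma~\ref{lem:avglem}). The only cosmetic difference is that the paper stratifies by the value $\nu_S(K)=s$ rather than by the full tuple $(R_p)_{p\in S}$, but since the fixed-type average in Theorem~\ref{thm:DH for quotients of cubics local} depends only on $r=\nu_S(K)-|S|$, the two stratifications are equivalent.
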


\begin{remark}
\label{rem:CLrem}
In Theorem \ref{thm:DH for quotients of cubics local}, $r_p$ is a the number of primes above $p$ in $K$. If the primes above $p$ take classes uniformly at random in $\Cl(K)$ subject only to the relation arising from the factorization of $p\cO_K$ and all of the primes in $S$ behave independently, then the subgroup $\langle S_K \rangle \le \Cl(K)$ may be throught of as a group generated by $r$ elements chosen uniformly at random from $\Cl(K)$. It is therefore natural to expect that for any finite abelian 2-group $H$, the probability $\Prob \left (\Cl(K)/\langle S \rangle[2^\infty] \simeq H \right )$ is equal to what Cohen and Lenstra dub the $u$-probability of $H$ with $u = r + r_\infty$, where $r_\infty$ is equal to $2$, $1$, and $0$ in cases (i), (ii), and (iii) of the theorem respectively. The average sizes appearing Theorem \ref{thm:DH for quotients of cubics local}  are precisely the $u$-averages for these values of $u$ \cite{CL}.
\end{remark}

\begin{remark}
\label{rem:nocyc}
All of the results we present are stated in terms of $\ST$-cubic fields. However, the results remain correct and can be proven with modified versions of the current proofs even if we remove the $\ST$ assumption.

We have nonetheless chosen to maintain the $\ST$-assumption throughout the paper for the purposes of clarity. Instead, we present the following argument showing that the total contribution to the average size of $\Cl(K)[2]$ coming from cyclic cubic fields as $K$ varies among all totally real cubic fields is zero.

It is well-known that the total number of cyclic cubic fields of discriminant less than $X$ grows like $O(X^{1/2})$ \cite{Cohn}. By a result of Wong \cite{Wong}, the size of $\Cl(K)[2]$ in any cyclic cubic field $K$ is bounded by $O(|\Disc(K)|^{3/8 + \epsilon})$ for any $\epsilon > 0$ (see also \cite{BSTTTZ} for a better bound).
As a result, the combined number of elements in $\Cl(K)[2]$ from all cyclic cubic fields with discriminant less than $X$ is bounded by $O(X^{7/8 + \epsilon})$. Since the number of totally real cubic fields of discriminant less than $X$ grows like $O(X)$, the total contribution to the average size of $\Cl(K)[2]$ coming from cyclic cubic fields must be zero.
\end{remark}

\subsection{Methods and Organization}
The core idea behind Theorem \ref{thm:DHforCubics} is how to use the geometry of numbers to count $\SF$-quartic fields. The application to class groups as it originally appears in \cite{BhargavaQuartics} arises from a bijection established by Heilbronn and slightly refined by Bhargava between the set of non-trivial two-torsion elements in the class group $\Cl(K)$ of an $\ST$-cubic field and the set of
\textit{nowhere overramified} isomorphism classes of $\SF$-quartic fields $L$ with \textit{resolvent field} $K$ (see Definitions \ref{def:res}~and~\ref{def:over}) that have a real place.

We establish a similar bijection in Section~\ref{sec:Class Field Theory} between the set of non-trivial two-torsion elements in $\Cl(K)_S$ and the set of isomorphism classes of $\SF$-quartic fields $L$ with \textit{resolvent field} $K$ satisfying certain local conditions. Section~\ref{sec:Counting Fields} presents recent results of Bhargava used for counting the number of such fields having bounded discriminant~\cite{BhargavaGeometricSieve} and Section~\ref{sec:Local Conditions} then describes how to apply to these techniques to prove Theorem \ref{thm:DH for quotients of cubics}. Section \ref{sec:Selmer Groups} defines the relaxed $2$-Selmer group of $K$ and develops the machinery needed to prove Theorem \ref{thm:DH quotients for Sel_2^S}. Finally, we prove Theorem \ref{thm:K2thm} in Section \ref{sec:Kgroups}.

\subsection{Acknowledgement}

I would like to thank Frank Thorne for suggesting the use of the Artin relation in the proof of Lemma \ref{lem:artrel} and for pointing out the result of Wong mentioned in Remark \ref{rem:nocyc}. I would also like to thank Alyson Deines for a number of useful suggestions.

\subsection{Notation}
We will use the following notation throughout this paper:
\begin{itemize}
\item $S$ will be a set of rational primes.
\item $K$ will be a cubic field.
\item $\cO_K$ will be the ring of integers of $K$.
\item $S_K$ will denote the set of primes of $\cO_K$ lying above primes in $S$.
\item $\cO_{K,S}^\times$ will denote the $S_K$-units of $K$.
\item $\Sel_2^S(K)$ will be the 2-Selmer group of $K$ relaxed at the primes in $S_K$.
\item $\Cl(K)$ will be the ideal class group of $\cO_K$.
\item $\Cl(K)_S$ will be the quotient $\Cl(K)/\langle S_K\rangle$, where $\langle S_K \rangle $ is the subgroup of $\Cl(K)$ generated by primes in $S_K$.
\end{itemize}

\section{Class Field Theory}
\label{sec:Class Field Theory}
Bhargava's results about $\Cl(K)[2]$ in~\cite{BhargavaQuartics} rely on a correspondence of Heilbronn detailed in~\cite{H}. We briefly describe this correspondence before establishing a similar correspondence for $\Cl(K)_S[2]$ in Proposition \ref{prop:DavCorQuotient}.

\begin{definition}
\label{def:res}
Given an $\SF$-quartic field $L$, the \textit{cubic resolvent field} $\Res(L)$ is the unqiue (up to isomorphism) cubic subfield of the Galois closure $N$ of $L/\QQ$. 
\end{definition}

While the resolvent field of $L$ is unique, non-isomorphic $L$ may share the same resolvent field.

Following Section 3.1 in \cite{BhargavaQuartics}, we make the following definition.

\begin{definition}
\label{def:over}
Let $p$ be a rational prime. A quartic field $L$ is called \textit{overramified at $p$} if $L \otimes \QQ_p$ is either irreducible and ramified or the direct sum of two ramified fields. The field $L$ is called \textit{nowhere overramified} if $L$ is not overramified at $p$ for any prime $p$.
\end{definition}

\begin{remark}
Note that unlike Bhargava's definition in \cite{BhargavaQuartics}, our definition of nowhere overramified does not include any restriction on the ramification of $L$ at infinity.
\end{remark}

The fields $L$ and $\Res(L)$ have the same discriminant precisely when $L$ is nowhere overramified.

\begin{proposition}
\label{prop:DavCor}
Let $K$ be an $\ST$-cubic field.
\begin{enumerate}[(A)]
\item The following are in bijective correspondence.
\begin{enumerate}[(i)]
\item \label{it:DF1}The set of index two subgroups of $\Cl(K)$.
\item \label{it:DF2}The set of unramified quadratic extensions $F$ of $K$.
\item \label{it:DF3}The set of isomorphism classes of nowhere overramified $\SF$-quartic fields $L$ having a real place with $\Res(L) = K$. 
\end{enumerate}
\item The following are in bijective correspondence.
\begin{enumerate}[(i)]
\item The set of index two subgroups of $\Cl(K)^+$.
\item The set of quadratic extensions $F$ of $K$ that are unramified at all finite places.
\item The set of isomorphism classes of nowhere overramified $\SF$-quartic fields $L$ with $\Res(L) = K$.
\end{enumerate}
\end{enumerate}
\end{proposition}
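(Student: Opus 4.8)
The plan is to prove (A) and (B) in parallel, since they differ only at the archimedean places, and within each to factor the chain of bijections through the class-field-theoretic equivalence (i)$\leftrightarrow$(ii) and the Galois-theoretic Heilbronn correspondence (ii)$\leftrightarrow$(iii). For (i)$\leftrightarrow$(ii), an index two subgroup of $\Cl(K)$ is exactly the kernel of a surjection $\Cl(K)\twoheadrightarrow\Zt$, and under the Artin reciprocity isomorphism $\Cl(K)\cong\Gal(H/K)$, with $H$ the Hilbert class field of $K$, these surjections correspond bijectively to the quadratic subextensions of $H/K$, that is, to quadratic extensions $F/K$ unramified at every place. Running the same argument with the narrow class group $\Cl(K)^+$ and the narrow Hilbert class field gives (B), the only difference being that ramification at the real places is now permitted; this single change is the ultimate source of the discrepancy between (A) and (B).

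For (ii)$\leftrightarrow$(iii) I would make Heilbronn's correspondence explicit on Galois groups. Let $\widetilde K$ denote the Galois closure of $K$, so that $\Gal(\widetilde K/\QQ)\cong\ST$ is generated by a three-cycle $\sigma$ and a transposition $\tau$ with $\Gal(\widetilde K/K)=\langle\tau\rangle$. Given an unramified quadratic extension $F/K$, the compositum $\widetilde F=F\widetilde K$ is an unramified quadratic extension of $\widetilde K$ whose associated character $\chi$ is fixed by $\tau$. A short computation shows that $\chi+\chi^\sigma+\chi^{\sigma^2}=0$, since the relevant norm factors through the trivial group $\Cl(\QQ)$; hence the three conjugates span an at most two-dimensional $\Ftwo$-space, and outside the degenerate case in which they coincide they span a two-dimensional space $W$ on which $\ST$ acts by its standard representation. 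The compositum $N=\widetilde F\cdot\widetilde F^{\sigma}\cdot\widetilde F^{\sigma^2}$ is then Galois over $\QQ$ with $\Gal(N/\widetilde K)\cong V_4$ and $\Gal(N/\QQ)\cong V_4\rtimes\ST\cong\SF$, and the fixed field $L$ of a point stabilizer $\ST\subset\SF$ is an $\SF$-quartic field with $\Res(L)=N^{V_4\rtimes\langle\tau\rangle}=K$. The inverse map recovers $N$ as the Galois closure of $L$, sets $\widetilde K=N^{V_4}$, and takes $\chi$ to be the unique nontrivial $\tau$-fixed character of $V_4$; that these are mutually inverse is bookkeeping. The one genuine subtlety here is the degenerate case, in which $\chi$ is $\ST$-invariant and $F=K\cdot k'$ is induced from a quadratic field $k'$ of $\QQ$: one must check that this produces a quartic algebra whose cubic resolvent is not the field $K$, so that it is correctly excluded from (iii), and reconcile this with the count in (i).

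It remains to match the local conditions. At the finite places, $L$ is nowhere overramified if and only if $\Disc(L)=\Disc(K)$, as recalled before the statement, and by the conductor--discriminant formula this equality is equivalent to $F/K$ being unramified at all finite places; this is the finite-place content common to (A) and (B). The archimedean matching is where the hypothesis that $L$ has a real place enters, and I expect it to be the main obstacle. The idea is to identify complex conjugation $c\in\Gal(N/\QQ)\cong\SF$ and to use that $L$ has a real place exactly when $c$ fixes one of the four roots, i.e.\ when $c$ is not a product of two disjoint transpositions. When $K$ is totally real, $\widetilde K$ is totally real, so $c\in V_4$, and $c$ is trivial --- forcing $L$ totally real --- precisely when $\chi$ is unramified at every real place; when $K$ is complex, one checks using $-1$ that $\Cl(K)^+=\Cl(K)$ and that $c$ is forced to be a transposition, so that $L$ automatically has a real place. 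Carrying out this cycle-type analysis of complex conjugation and confirming that it dovetails with the class-field-theoretic dictionary, so that ``$F$ unramified at infinity'' matches ``$L$ has a real place,'' is the delicate heart of the argument and the step I would budget the most care for.
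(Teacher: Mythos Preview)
Your plan follows the same skeleton as the paper---class field theory for (i)$\leftrightarrow$(ii), Heilbronn's correspondence for (ii)$\leftrightarrow$(iii)---but you go well beyond it: the paper simply cites \cite{H} for (ii)$\leftrightarrow$(iii) and only \emph{describes} the two maps, whereas you attempt a self-contained construction via the character $\chi$ on $\Cl(\widetilde K)$ and the identification $V_4\rtimes\ST\cong\SF$. Your archimedean analysis, reading off the real places of $L$ from the cycle type of complex conjugation in $\SF$, is exactly what is needed to make precise the paper's one-line ``restrict (B) to (A),'' and is more careful than the paper's own treatment. (The key observation you allude to for $K$ complex is simply that complex conjugation has order at most two, hence cannot be a $4$-cycle, so $L$ always has a real place.)

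The one step that needs tightening is your justification of $\chi+\chi^\sigma+\chi^{\sigma^2}=0$. The phrase ``the relevant norm factors through $\Cl(\Q)$'' hides real content. The naive norm relation $N_{\widetilde K/\Q}$ gives $\sum_{g\in\ST}g\chi=0$, and using $\tau\chi=\chi$, $\tau(\sigma\chi)=\sigma^2\chi$ to reduce to a sum over $\langle\sigma\rangle$ yields only $2(\chi+\sigma\chi+\sigma^2\chi)=0$, which is vacuous over $\Ftwo$. The honest argument uses that $\chi$ is not an arbitrary $\tau$-fixed character but is pulled back from $\Cl(K)^+$ via the norm $N_{\widetilde K/K}$ (since $\widetilde F=F\cdot\widetilde K$); one then needs an ideal-theoretic identity expressing $N_{\widetilde K/K}(c)\cdot N_{\widetilde K/K}(\sigma c)\cdot N_{\widetilde K/K}(\sigma^2 c)$ as the extension of $N_{\widetilde K/\Q}(c)$ to $K$. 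This is the substantive reason the Galois closure of $F/\Q$ has group $\SF$ rather than $C_2\wr\ST$, and it is precisely what Heilbronn proves. The paper sidesteps the whole issue by citing him.
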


\begin{proof}
For both (A) and (B), the correspondence (i) $\leftrightarrow$ (ii) is class field theory. The correspondence between (ii) and (iii) in (B) is detailed in \cite{H} and the correspondence between (ii) and (iii) in (A) comes from restricting this correspondence to the sets in (A). 
While we do not include a proof of the equivalence between (ii) and (iii) here, we will however describe the maps yielding the correspondence.

We begin with the map (iii) $\rightarrow$ (ii). Let $N$ be the Galois closure of $L/\QQ$. By assumption, $\Gal(N/\QQ) \simeq \SF$. The group $\SF$ contains three distinct $D_4$ subgroups, all of which are conjugate. The resolvent field $K = \Res(L)$ may be taken to be the fixed field of any of these $D_4$ subgroups. The group $D_4$ contains two distinct Klein four subgroups, only one of which contains a transposition when $D_4$ is viewed as a subgroup of $\Gal(N/\QQ) \simeq \SF$. Letting $V$ be that subgroup, the extension $F/K$ is given by the fixed field of $V$.

We next describe the map (ii) $\rightarrow$ (iii). While the field $F/\QQ$ is not Galois, the lack of ramification (at finite primes) in $F/K$ forces its Galois closure $N$ to be an $\SF$-extension of $\QQ$. The group $\SF$ contains four distinct $\ST$ subgroups, all of which are conjugate. The field $L$ may be taken to be the fixed field of any of these $\ST$ subgroups.
\end{proof}

\begin{remark}
In both Proposition \ref{prop:DavCor} and Proposition \ref{prop:DavCorQuotient} which follows, if $K$ is taken to be a complex cubic field, then $\Cl(K) = \Cl(K)^+$ and parts (A) and (B) are equivalent.
\end{remark}

\begin{proposition}
\label{prop:DavCorQuotient}
Let $K$ be an $\ST$-cubic field.
\begin{enumerate}[(A)]
\item The following are in bijective correspondence.
\begin{enumerate}[(i)]
\item The set of index two subgroups of $\Cl(K)_S$.
\item The set of unramified quadratic extensions $F$ of $K$ in which all primes in $S_K$ split completely.
\item The set of isomorphism classes of nowhere overramified $\SF$-quartic fields $L$ such that $\Res(L) = K$, $L$ has a real place, and $L \otimes \QQ_p$ has a component equal to $\QQ_p$ for all $p \in S$.
\end{enumerate}
\item The following are in bijective correspondence.
\begin{enumerate}[(i)]
\item The set of index two subgroups of $\Cl(K)^+_S$.
\item The set of quadratic extensions $F$ of $K$ that are unramified at all finite places and in which all primes in $S_K$ split completely.
\item The set of isomorphism classes of nowhere overramified $\SF$-quartic fields $L$ such that $\Res(L) = K$ and $L \otimes \QQ_p$ has a component equal to $\QQ_p$ for all $p \in S$.
\end{enumerate}
\end{enumerate}
\end{proposition}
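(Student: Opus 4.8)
The plan is to deduce Proposition \ref{prop:DavCorQuotient} from Proposition \ref{prop:DavCor} by imposing the extra splitting condition at the primes of $S_K$ on each of the three equivalent objects and checking that the bijections of Proposition \ref{prop:DavCor} carry the refined sets to one another. Since Proposition \ref{prop:DavCor} already establishes the three-way correspondence for the unrestricted objects, the entire task reduces to proving that, under those bijections, the condition ``all primes in $S_K$ split completely in $F$'' on the side (ii) is equivalent to the condition ``$L \otimes \QQ_p$ has a component equal to $\QQ_p$ for all $p \in S$'' on the side (iii), and that on the side (i) this corresponds exactly to passing from index-two subgroups of $\Cl(K)$ (resp.\ $\Cl(K)^+$) to index-two subgroups of the quotient $\Cl(K)_S$ (resp.\ $\Cl(K)^+_S$).

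The argument for (i) $\leftrightarrow$ (ii) is pure class field theory and should come first. An index-two subgroup of $\Cl(K)_S = \Cl(K)/\langle S_K\rangle$ is the same thing as an index-two subgroup of $\Cl(K)$ containing $\langle S_K\rangle$. Under the Artin reciprocity correspondence from Proposition \ref{prop:DavCor}, an index-two subgroup $H \le \Cl(K)$ corresponds to the unramified quadratic extension $F/K$ whose norm group is $H$; the class of a prime $\mathfrak{q}$ lies in $H$ precisely when $\mathfrak{q}$ splits in $F$. Hence $H \supseteq \langle S_K\rangle$ if and only if every prime in $S_K$ splits completely in $F$, which is exactly the added condition in (ii). The same reasoning with $\Cl(K)^+$ in place of $\Cl(K)$ gives part (B), using that the unramified-at-finite-places extensions correspond to subgroups of the narrow class group.

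Next I would handle (ii) $\leftrightarrow$ (iii) by a purely local analysis at each $p \in S$, tracking what the splitting of $S_K$ in $F$ means through the explicit $\mathcal{S}_4$-Galois-theoretic maps recalled in the proof of Proposition \ref{prop:DavCor}. Fix $p \in S$ and a prime $\mathfrak{q}$ of $K$ above $p$; the condition that $\mathfrak{q}$ splits completely in the unramified extension $F/K$ says the decomposition group at a prime of $N$ above $\mathfrak{q}$ lands inside the Klein-four subgroup $V$ cutting out $F$. Translating this through the inclusion $\mathcal{S}_3 \hookrightarrow \mathcal{S}_4$ whose fixed field is $L$, one checks that the splitting of $\mathfrak{q}$ in $F$ forces the corresponding $p$-adic factor of $L$ to contain a copy of $\QQ_p$, i.e.\ $L \otimes \QQ_p$ acquires a $\QQ_p$-component, and conversely. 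This is a finite case-check over the possible decomposition types of $p$ in $K$ (equivalently over the conjugacy classes of cyclic subgroups of the relevant $D_4$ that can arise as decomposition groups at an unramified-or-nowhere-overramified prime), and the nowhere-overramified hypothesis is what guarantees the discriminants agree and keeps the local picture tame enough to read off the $\QQ_p$-component cleanly.

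\textbf{The main obstacle} I expect is precisely this last local translation: verifying in each decomposition-type case that ``$\mathfrak{q}$ splits completely in $F$'' and ``$L\otimes\QQ_p$ has a $\QQ_p$-factor'' are genuinely equivalent, rather than merely related. The subtlety is that a single rational prime $p$ may split into several primes $\mathfrak{q}$ of $K$ with different behaviors, so I must confirm that requiring \emph{all} primes of $S_K$ above $p$ to split in $F$ matches the single condition on $L \otimes \QQ_p$, and handle the interaction between the several embeddings $\mathcal{S}_3, D_4 \hookrightarrow \mathcal{S}_4$ (which are individually only well-defined up to conjugacy) so that the decomposition-group computation is independent of the choices made. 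Organizing this as a comparison of decomposition groups inside $\mathcal{S}_4$, using that conjugate choices give isomorphic completions, should make the case analysis uniform and finite; it is bookkeeping rather than deep, but it is where all the actual content of the proposition lives.
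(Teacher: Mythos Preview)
Your plan for (i) $\leftrightarrow$ (ii) is exactly what the paper does (and your write-up is more explicit than the paper's one-line ``class field theory''). For (ii) $\leftrightarrow$ (iii), however, the paper takes a different route that sidesteps precisely the obstacle you flag.

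The paper isolates the local equivalence as a separate lemma (Lemma~\ref{lem:artrel}): for a fixed rational prime $p$, all primes of $K$ above $p$ split in $F$ if and only if $L\otimes\QQ_p$ has a $\QQ_p$-component. The direction ``$L\otimes\QQ_p$ has a $\QQ_p$-component $\Rightarrow$ every $\mathfrak q\mid p$ splits in $F$'' is argued group-theoretically, much as you suggest: if $\Gal(N_P/\QQ_p)$ lies in a copy of $\mathcal S_3$, then $\Gal(N_P/K_{\mathfrak q})$ lies in $\mathcal S_3\cap D_4$, and one checks this intersection is always contained in the Klein-four subgroup $V=\Gal(N/F)$. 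But for the \emph{converse} the paper does not do a case analysis of decomposition groups; instead it invokes the Artin relation $\zeta_L(s)=\zeta(s)\zeta_F(s)/\zeta_K(s)$. If every $\mathfrak q\mid p$ splits in $F$ then the $p$-parts satisfy $\zeta_F^{(p)}=\zeta_K^{(p)}$, hence $\zeta_L^{(p)}(s)=(1-p^{-s})^{-1}\zeta_K^{(p)}(s)$, which forces an unramified degree-one factor at $p$ in $L$. This handles all decomposition types (including ramified ones) uniformly and in one line.

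Your proposed case-check should also succeed, but note a small slip: the relevant decomposition groups for reading off $L\otimes\QQ_p$ are the $\Gal(N_P/\QQ_p)$ inside $\mathcal S_4$, not ``cyclic subgroups of the relevant $D_4$'', and they need not be cyclic (only metacyclic, with possibly nontrivial inertia when $p$ ramifies in $K$). The enumeration is over conjugacy classes of admissible subgroups of $\mathcal S_4$, constrained by the nowhere-overramified condition. This is finite and doable, but the Artin-relation argument is what the paper uses to avoid it.
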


\begin{proof}
For both (A) and (B), the correspondence (i) $\leftrightarrow$ (ii) is class field theory. The equivalences (ii) $\leftrightarrow$ (iii) in (A) and (B) follow from the similar equivalences in Proposition \ref{prop:DavCor} and Lemma \ref{lem:artrel} appearing below.
\end{proof}

\begin{lemma}
\label{lem:artrel}
Let $K$ be an $\ST$-cubic field field and $F/K$ a quadratic extension unramified at all finite places. If 
$L$ is the $\SF$-quartic field corresponding to $F/K$ under Heilbronn's correspondence as in Proposition \ref{prop:DavCor}, then $L \otimes \QQ_p$  has a component equal to $\QQ_p$ if and only if all primes of $K$ above $p$ split in $F/K$.
\end{lemma}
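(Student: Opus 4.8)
The plan is to pass to the Galois closure and rephrase both conditions as statements about a single decomposition group. Write $G = \Gal(N/\QQ) \cong \SF$ and realize the three fields as fixed fields: $L = N^{\ST}$ for a point stabilizer $\ST$, $K = \Res(L) = N^{D_4}$, and $F = N^{V}$ for the Klein four subgroup $V \triangleleft D_4$ of the proof of Proposition~\ref{prop:DavCor}, so that $D_4/V \cong \Gal(F/K)$. Fix a prime of $N$ above $p$ with decomposition group $D \le G$. Using the standard fact that the primes of a fixed field $N^H$ above $p$ correspond to the $D$-orbits on $G/H$, with the product $ef$ equal to the orbit size, a component of $L \otimes \QQ_p$ isomorphic to $\QQ_p$ is exactly a singleton $D$-orbit on the four-element set $G/\ST$; hence $L \otimes \QQ_p$ has a $\QQ_p$-component if and only if $D$ is subconjugate to $\ST$. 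Similarly, because $V$ is normal of index two in $D_4$ and $F/K$ is unramified at all finite primes, a prime $\q$ of $K$ above $p$ splits in $F/K$ if and only if the decomposition group of $\q$ in $\Gal(N/K) = D_4$ lies in $V$ (a condition well defined since $V \triangleleft D_4$); running over all primes of $K$, the right-hand condition becomes $\sigma D \sigma^{-1} \cap D_4 \subseteq V$ for all $\sigma \in G$. The lemma is thus reduced to the purely group-theoretic equivalence $\bigl(\exists\,\sigma:\ \sigma D \sigma^{-1} \subseteq \ST\bigr) \Longleftrightarrow \bigl(\forall\,\sigma:\ \sigma D \sigma^{-1} \cap D_4 \subseteq V\bigr)$.

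Next I would bring in the Artin relation to handle this equivalence conceptually. Decomposing the three permutation characters into irreducibles of $\SF$ gives
$$\chi_{\ST} = \mathbf{1} + \rho, \qquad \chi_{D_4} = \mathbf{1} + \psi, \qquad \chi_{V} = \mathbf{1} + \psi + \rho,$$
where $\rho$ is the standard three-dimensional and $\psi$ the two-dimensional irreducible, so that $\chi_{V} + \mathbf{1} = \chi_{\ST} + \chi_{D_4}$. On the level of Dedekind zeta functions this is the Artin relation $\zeta_F(s)\,\zeta_\QQ(s) = \zeta_L(s)\,\zeta_K(s)$, an identity valid at every prime. For $p$ unramified in $N$ the decomposition group $D = \langle \Frob_p\rangle$ is cyclic, and both sides of the group-theoretic equivalence reduce to the cycle type of $\Frob_p$: $D$ is subconjugate to $\ST$ exactly when $\Frob_p$ fixes a point of the four-element set (i.e.\ $\Frob_p$ is trivial, a transposition, or a three-cycle), which is precisely the condition that no conjugate of a power of $\Frob_p$ lands in $D_4 \setminus V$. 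This is the transparent case, and it is what the character identity (equivalently, comparison of the Euler factors of $\zeta_F\zeta_\QQ$ and $\zeta_L\zeta_K$, which records the multiset of residue degrees) encodes directly.

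It then remains to treat the primes ramified in $N$, where $D$ is non-cyclic. Here the hypothesis that $F/K$ is unramified at the finite primes is essential: via Heilbronn's correspondence it is equivalent to $L$ being nowhere overramified (equivalently $\Disc(L) = \Disc(K)$), which restricts the inertia subgroup $I \le D$ and hence the possible decomposition groups that can occur. For each such $D$ one verifies the equivalence of the first paragraph directly; since there are only finitely many conjugacy classes of subgroups of $\SF$, this is a bounded check, and in every class the two sides agree (for instance both fail when $D$ contains a four-cycle, a double transposition outside $V$, or a conjugate of $V_2$, and both hold when $D$ is subconjugate to $\ST$). Assembling the unramified and ramified cases completes the proof.

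The main obstacle is exactly this last step. The Artin relation, being an identity of zeta functions, only controls residue degrees, whereas detecting a $\QQ_p$-component of $L \otimes \QQ_p$ requires the full condition $e = f = 1$; it is the nowhere-overramified hypothesis that converts the information about $f$ into information about $e$, rules out spurious ramified primes of residue degree one, and constrains the inertia so that only a short list of decomposition groups survives. Organizing the verification around the representation identity $\chi_{V} + \mathbf{1} = \chi_{\ST} + \chi_{D_4}$ is what makes the generic (Frobenius) case immediate and isolates the genuine work in the finitely many ramified configurations.
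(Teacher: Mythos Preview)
Your reduction in the first paragraph is correct and is genuinely different from the paper's argument. The paper treats the two implications asymmetrically: for ``$\QQ_p$-component $\Rightarrow$ all $\p$ split'' it uses exactly the group-theoretic fact you isolate (every point stabilizer $\ST$ meets $D_4$ inside $V$), while for the converse it works directly with the Euler factors of the Artin relation $\zeta_L=\zeta\,\zeta_F/\zeta_K$ and then invokes the nowhere-overramified hypothesis to match up the ramified factors of $\zeta_L^{(p)}$ and $\zeta_K^{(p)}$, leaving an honest $(1-p^{-s})^{-1}$ from an unramified degree-one prime. Your plan instead reduces both implications to the single statement
\[
\bigl(\exists\,\sigma:\ \sigma D\sigma^{-1}\subseteq\ST\bigr)\ \Longleftrightarrow\ \bigl(\forall\,\sigma:\ \sigma D\sigma^{-1}\cap D_4\subseteq V\bigr)
\]
about an arbitrary subgroup $D\le\SF$, and this is a perfectly good route.

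Where you go astray is in the last paragraph. There is no obstacle: the equivalence above holds for \emph{every} subgroup $D\le\SF$, with no hypothesis on inertia and no appeal to the Artin relation. The forward direction is the computation $\sigma\ST\sigma^{-1}\cap D_4\subseteq V$ for each of the four point stabilizers. For the reverse, one simply lists the conjugacy classes of subgroups with no fixed point on four letters---$\langle(12)(34)\rangle$, $\langle(1234)\rangle$, the normal $V_4$, the non-normal Klein groups, $D_4$, $A_4$, $\SF$---and observes that each has a conjugate meeting $D_4\smallsetminus V=\{(1234),(1432),(12)(34),(14)(23)\}$. The detour through permutation characters buys nothing here, and the nowhere-overramified hypothesis plays no role beyond setting up the Heilbronn correspondence. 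Your worry that the zeta identity ``only controls residue degrees'' is a valid comment on the \emph{paper's} Euler-product argument (and is precisely why the paper needs the ramification observation), but it does not apply to your own framework: a singleton $D$-orbit on $G/\ST$ already forces $ef=1$, hence $e=f=1$, so once you pass to decomposition groups the $e$-versus-$f$ issue disappears.
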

\begin{proof}
Let $N$ be the Galois closure of $L/\QQ$. If $L \otimes \QQ_p$  has a component equal to $\QQ_p$, then $\Gal(N_P/\Q_p) \le \ST$ for all primes $P \mid p$ of $N$. Letting $\p$ be a prime of $K$ above $p$, we have $\Gal(N_P/K_\p) \le \Gal(N/K)$. Since $\Gal(N_P/K_\p)$ must simultaneously embed into a copy of $\ST$ and a copy of $D_4$ inside of $\SF$, we find that $\Gal(N_P/K_\p)$ must be a subgroup of the Klein four group $\Gal(N/F)$. As a result, $\p$ splits in $F/K$.

For the opposite direction, we rely on the Artin relation of zeta functions (see \cite{H}, for example)
\begin{equation}
\label{eq:artrel}
\zeta_L(s) = \frac{\zeta(s)\zeta_F(s)}{\zeta_K(s)}.
\end{equation}
If all primes of $K$ above $p$ split in $F/K$, then we have $\zeta^{(p)}_F(s) = \zeta^{(p)}_K(s)$, where $\zeta^{(p)}_{*}(s)$ denotes the part of the Euler product for $\zeta_{*}(s)$ coming from primes above $p$. As a result, if all primes of $K$ above $p$ split in $F/K$, then \eqref{eq:artrel} yields 
\begin{equation}
\label{eq:artrel2}
\zeta^{(p)}_L(s) = (1 - p^{-s})^{-1}\zeta^{(p)}_K(s).
\end{equation}
Observe that if $L \otimes \QQ_p$ contains a ramified component, then $K \otimes \QQ_p$ must contain a ramified component of the same degree. As a result, \eqref{eq:artrel2} holds even if we restrict to the Euler factors coming from unramified primes. As a result, the Euler product for $\zeta_L(s)$ contains a factor of $(1 - p^{-s})^{-1}$ coming from an unramified prime above $p$, and the $L \otimes \QQ_p$ has a component equal to $\QQ_p$.
\end{proof}

We therefore get the following corollary:
\begin{corollary}
\label{cor:DavCor for quotients}
If $K$ is an $\ST$-cubic field, then
\begin{enumerate}[(i)]
\item 
$
\left |
\Cl(K)_S[2]
\right |
=
1 +
\left |
\left \{
\begin{matrix}
\text{nowhere overramified }
\SF\text{-quartic fields }
L
\text{ (up to iso.)}\\
\text{such that }
\Res(L) = K
\text{, }
L \text{ has a real place, and}\\
L \otimes \QQ_p
\text{ has a component equal to }
\QQ_p
\text{ for all }
p \in S
\end{matrix}
\right \}
\right |
$
and

\vspace{0.1in}

\item$
\left |
\Cl(K)^+_S[2]
\right |
=
1 +
\left |
\left \{
\begin{matrix}
\text{nowhere overramified }
\SF\text{-quartic fields }
L \\
\text{(up to iso.) }
\text{such that }
\Res(L) = K
\text{ and }
L \otimes \QQ_p \\
\text{ has a component equal to }
\QQ_p
\text{ for all }
p \in S
\end{matrix}\right \}
\right |
$.
\end{enumerate}
\end{corollary}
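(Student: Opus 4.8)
The plan is to deduce both formulas directly from Proposition~\ref{prop:DavCorQuotient}, combined with the elementary fact that, for a finite abelian group $G$, the number of index-two subgroups equals $|G[2]| - 1$. So the real work has already been done in establishing the bijection of Proposition~\ref{prop:DavCorQuotient}; what remains is to translate "number of index-two subgroups" into "$|G[2]|$" on the group-theoretic side.

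First I would record the counting lemma. An index-two subgroup of $G$ is exactly the kernel of a surjection $G \to \Zt$, and since $\Zt$ has trivial automorphism group, distinct index-two subgroups correspond to distinct nonzero elements of $\mathrm{Hom}(G, \Zt)$. Writing $G \cong \prod_i \ZZ/n_i\ZZ$ in terms of its invariant factors gives $|\mathrm{Hom}(G,\Zt)| = \prod_i \gcd(n_i,2) = |G[2]|$, so the number of surjections $G \to \Zt$, and hence the number of index-two subgroups of $G$, is exactly $|G[2]| - 1$.

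Next I would apply this with $G = \Cl(K)_S$ for part (i) and $G = \Cl(K)^+_S$ for part (ii). Each is finite, being a quotient of the finite group $\Cl(K)$ (resp.\ $\Cl(K)^+$), so the lemma applies. By part (A) of Proposition~\ref{prop:DavCorQuotient} the index-two subgroups of $\Cl(K)_S$ are in bijection with the isomorphism classes of nowhere overramified $\SF$-quartic fields $L$ with $\Res(L)=K$, a real place, and $L\otimes\QQ_p$ having a $\QQ_p$-component for every $p\in S$; likewise part (B) matches the index-two subgroups of $\Cl(K)^+_S$ with the analogous set in which the real-place condition is dropped. Equating the two expressions for the number of index-two subgroups and adding $1$ to both sides then yields the stated identities.

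The argument is essentially bookkeeping, so there is no substantive obstacle; the only points meriting care are verifying $|\mathrm{Hom}(G,\Zt)| = |G[2]|$ and noting that the triviality of $\mathrm{Aut}(\Zt)$ is precisely what makes the passage from index-two subgroups to surjections a genuine bijection rather than merely a surjection.
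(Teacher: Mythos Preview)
Your proof is correct and follows essentially the same approach as the paper: both invoke Proposition~\ref{prop:DavCorQuotient} and the fact that for a finite abelian group the number of index-two subgroups equals $|G[2]|-1$. The paper simply asserts this counting fact without justification, whereas you supply the short argument via $\mathrm{Hom}(G,\Zt)$, but the structure is identical.
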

\begin{proof}
Since $\Cl(K)_S$ is a finite abelian group, the number of index two subgroups of $\Cl(K)_S$ is the same as the number of non-trivial two-torsion elements of $\Cl(K)_S$. By Proposition \ref{prop:DavCorQuotient}, this is equal to the number of quartic fields $L$ such that $\Res(L)=K$, $L$ has a real place and $L \otimes \QQ_p$ has $\QQ_p$ component for all $p \in S$. The result for $\Cl(K)^+_S$ follows similarly.
\end{proof}

\section{Counting Fields}
\label{sec:Counting Fields}

In order to prove Theorems \ref{thm:DH for quotients of cubics} and \ref{thm:DH for quotients of cubics local}, we will need to be able to count the number of quartic fields of bounded discriminant satisfying a given set of local conditions.

For a set $\Sigma_p$ of $\QQ_p$-algebras, define $\mu_p(\Sigma_p)$ as
$$\mu_p(\Sigma_p) := \sum_{R \in \Sigma_p} \frac{p-1}{p} \cdot \frac{1}{\Disc_p(R)} \cdot \frac{1}{|\Aut(R)|},$$
where $\Disc_p(R)$ is the $p$-part of  the discriminant of $R$.

For each $p \in S$, let $\Sigma_p$ be a set of non-overramified rank 4 $\QQ_p$ and set $\Sigma = (\Sigma_p)_{p\in S}$. For each $i \in \{0,2,4\}$, define $N^{(i)}(X, \Sigma)$ to be the number of nowhere overramified $\SF$-quartic fields $L$ (up to isomorphism) such that $|\Disc(L)| < X$, $L$ has $i$ real places, and $L \otimes \QQ_p \in \Sigma_p$ for all $p \in S$.

We then have the following specialization of a theorem of Bhargava.

\begin{theorem}[Theorem 1.3 in \cite{BhargavaGeometricSieve}]
\label{thm:quarticfldcount}
For each $i \in \{0,2,4\}$, 
\begin{equation}
N^{(i)}(X, \Sigma)
=
\frac{1}{2n_i\zeta(3)} \prod_{p \in S} \frac{\mu_p(\Sigma_p)}{\bm{\mu}_p} \cdot X + o(X)
\end{equation}
where
$\bm{\mu}_p = 1 - \frac{1}{p^3}$ and $n_i = \left \{ \begin{matrix}8 & \text{if } i = 0\\ 4 &\text{if } i = 2 \\24 & \text{if } i = 4\end{matrix} \right . .$
\end{theorem}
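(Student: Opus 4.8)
The plan is to deduce the stated formula from Bhargava's general quartic-field counting theorem by specializing the collection of local conditions and then matching the resulting constant factor by factor. Bhargava's master theorem expresses the number of $\SF$-quartic fields of bounded discriminant satisfying a finite family of local conditions as a leading term of the shape $c\cdot X$ together with an error of size $o(X)$, where the constant $c$ factors as an archimedean density multiplied by an Euler product of $p$-adic densities. The first step is to identify the three ingredients of $c$ in our setting: the archimedean factor recording the signature (the number $i$ of real places), the product over primes $p \notin S$ of the generic non-overramified local densities, and the product over $p \in S$ of the restricted densities attached to $\Sigma_p$.

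For the archimedean factor, I would show that prescribing $i$ real places contributes $\tfrac{1}{2n_i}$, where the integers $n_i \in \{8,4,24\}$ record the relevant symmetry/automorphism counts of the real algebra of the appropriate signature and the extra factor of $2$ comes from the normalization in Bhargava's parametrization. For the finite primes the essential local input is the mass computation showing that the $p$-adic density attached to the set of \emph{all} non-overramified rank $4$ $\QQ_p$-algebras equals $\bm{\mu}_p = 1 - p^{-3}$; concretely, this is the finite sum of the local masses $\tfrac{p-1}{p}\cdot\tfrac{1}{\Disc_p(R)}\cdot\tfrac{1}{|\Aut(R)|}$ taken over the étale and partially ramified, but non-overramified, algebras $R$. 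Since the global count already incorporates the nowhere-overramified hypothesis as a local condition at every prime, taking the Euler product of these generic densities over all primes produces $\prod_p(1-p^{-3}) = \zeta(3)^{-1}$, which is exactly the factor $\tfrac{1}{\zeta(3)}$ appearing in the baseline constant $\tfrac{1}{2n_i\zeta(3)}$.

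Having established the baseline, the last algebraic step is to impose, at each $p \in S$, the refined condition $L \otimes \QQ_p \in \Sigma_p$. This replaces the generic local density $\bm{\mu}_p$ by the restricted density $\mu_p(\Sigma_p)$, and hence multiplies the baseline constant by the ratio $\mu_p(\Sigma_p)/\bm{\mu}_p$ at each such prime. Assembling the archimedean factor, the generic Euler product, and these finitely many ratios yields precisely
\begin{equation*}
N^{(i)}(X, \Sigma) = \frac{1}{2n_i\zeta(3)} \prod_{p \in S} \frac{\mu_p(\Sigma_p)}{\bm{\mu}_p} \cdot X + o(X),
\end{equation*}
with the convention that taking each $\Sigma_p$ to be the full set of non-overramified algebras makes every ratio equal to $1$ and recovers the baseline count.

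The main obstacle is not the bookkeeping of constants but the analytic input guaranteeing a genuine $o(X)$ error once the local conditions are imposed. The difficulty is to control, uniformly, the contribution of quartic rings whose discriminants carry large square divisors, so that the finitely many congruence conditions at the primes of $S$ separate cleanly from the global count; this is exactly the purpose of Bhargava's geometric sieve. I would therefore quote the sieve estimate of \cite{BhargavaGeometricSieve} as the engine that licenses the Euler-product factorization of the main term, thereby reducing the theorem to the archimedean and local mass computations described above.
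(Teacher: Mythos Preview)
Your proposal is correct and follows essentially the same route as the paper: apply Bhargava's Theorem~1.3 from \cite{BhargavaGeometricSieve}, verify that the local mass of the full set of non-overramified rank~$4$ $\QQ_p$-algebras equals $\bm{\mu}_p = 1 - p^{-3}$, and then replace the generic density by $\mu_p(\Sigma_p)$ at the finitely many $p\in S$ to obtain the ratios $\mu_p(\Sigma_p)/\bm{\mu}_p$. The only cosmetic difference is that the paper packages the baseline nowhere-overramified count $\tfrac{1}{2n_i\zeta(3)}X$ by citing Lemma~27 of \cite{BhargavaQuartics}, whereas you recover it directly from the Euler product $\prod_p(1-p^{-3})=\zeta(3)^{-1}$ together with the archimedean factor $\tfrac{1}{2n_i}$.
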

\begin{proof}
This will follow from Theorem 1.3 in \cite{BhargavaGeometricSieve}. For each prime $p$, define $\hat \Sigma_p$ to be the set of all rank 4 non-overramified $\QQ_p$-algebras. An easy computation shows that $\mu_p(\Sigma_p) = 1 - \frac{1}{p^3}$. We then set $\hat \Sigma = (\hat \Sigma_p)_{p}$.

For each prime $p \in S$, let $\Sigma_p$ be as above and for $p \not \in S$, set $\Sigma_p = \hat \Sigma_p$. Applying Theorem~1.3 in \cite{BhargavaGeometricSieve}, we then get
$$
N^{(i)}(X, \Sigma) =
\frac{1}{n_i}\prod_p \mu_p(\hat \Sigma_p)
\prod_{p \in S} \frac{\mu_p(\Sigma_p)}{\mu_p(\hat \Sigma_p)}
=
N^{(i)}(X, \hat \Sigma) \prod_{p \in S} \frac{\mu_p(\Sigma_p)}{\mu_p(\hat \Sigma_p)}
$$

However, $N^{(i)}(X, \hat \Sigma)$ is simply the number of nowhere totally ramified $\SF$-quartic fields $L$ (up to isomorphism) such that $|\Disc(L)| < X$ and $L$ has $i$ real places. By Lemma 27 in \cite{BhargavaQuartics}, this is known to be $\frac{1}{2n_i\zeta(3)} \cdot X + o(X)$, so the result follows.
\end{proof}

We may similarly count the number of cubic fields of bounded discriminant satisfying a set of local conditions.
For a set of local conditions $\Sigma = (\Sigma_p)_{p\in S}$ where each $\Sigma_p$ is a set of rank 3 $\QQ_p$-algebras and each $i \in \{1,3\}$, define $M^{(i)}(X, \Sigma)$ to be the number of $\ST$-cubic fields $K$ (up to isomorphism) such that $|\Disc(K)| < X$, $K$ has $i$ real places, and $K \otimes \QQ_p \in \Sigma_p$ for all $p \in S$.

\begin{theorem}[Theorem 1.3 in \cite{BhargavaGeometricSieve}]
\label{thm:cubicfldcount}
For each $i \in \{1,3\}$, 
\begin{equation}
M^{(i)}(X, \Sigma)
=
\frac{1}{2m_i \zeta(3)} \prod_{p \in S} \frac{\mu_p(\Sigma_p)}{\bm{\mu}_p} \cdot X + o(X)
\end{equation}
where $m_1 = 2$, $m_3 = 6$, and $\bm{\mu}_p$ is as in Theorem \ref{thm:quarticfldcount}.
\end{theorem}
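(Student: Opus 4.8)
The plan is to follow the proof of Theorem \ref{thm:quarticfldcount} essentially line for line, with rank $3$ étale $\QQ_p$-algebras in place of rank $4$ ones and the Davenport--Heilbronn count of cubic fields in place of the quartic count of Lemma~27 of \cite{BhargavaQuartics}. For each prime $p$, let $\hat\Sigma_p$ denote the set of all rank $3$ étale $\QQ_p$-algebras and put $\hat\Sigma = (\hat\Sigma_p)_p$. The first step is the purely local computation $\mu_p(\hat\Sigma_p) = \bm{\mu}_p = 1 - p^{-3}$. I would prove this by enumerating the isomorphism types of étale cubic $\QQ_p$-algebra --- the split algebra $\QQ_p^3$, the product $\QQ_p \times \QQ_{p^2}$ of $\QQ_p$ with the unramified quadratic, the unramified cubic $\QQ_{p^3}$, the products of $\QQ_p$ with a ramified quadratic, and the totally ramified cubic fields --- together with their discriminant valuations and automorphism groups, and summing the weights $\frac{p-1}{p}\cdot\Disc_p(R)^{-1}|\Aut(R)|^{-1}$. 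For $p \geq 5$ the split, $\QQ_p\times\QQ_{p^2}$, and unramified cubic pieces contribute $\frac16 + \frac12 + \frac13 = 1$, the $\QQ_p$-times-ramified-quadratic pieces contribute $\frac1p$, and Serre's mass formula for totally ramified extensions shows the totally ramified cubics contribute $\frac1{p^2}$ regardless of $p \bmod 3$; thus $\frac{p-1}{p}(1 + p^{-1} + p^{-2}) = 1 - p^{-3}$. The wildly ramified primes $p = 2, 3$ give the same value by the general mass formula for extensions of local fields.

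Given the local conditions $\Sigma = (\Sigma_p)_{p\in S}$, I would next extend to all primes by setting $\Sigma_p = \hat\Sigma_p$ for $p \notin S$, and invoke the cubic case of Theorem~1.3 of \cite{BhargavaGeometricSieve} --- cubic fields being parametrized by integral binary cubic forms, to which the geometric sieve applies exactly as in the quartic case --- to peel off the finitely many imposed conditions:
\[
M^{(i)}(X,\Sigma) = M^{(i)}(X,\hat\Sigma)\prod_{p\in S}\frac{\mu_p(\Sigma_p)}{\mu_p(\hat\Sigma_p)} + o(X).
\]
The term $M^{(i)}(X,\hat\Sigma)$ carries no condition, so it is simply the total number of $\ST$-cubic fields with $i$ real places and $|\Disc| < X$; by the Davenport--Heilbronn theorem this equals $\frac{1}{2m_i\zeta(3)}X + o(X)$, the constants $\frac1{12\zeta(3)}$ for $i = 3$ and $\frac1{4\zeta(3)}$ for $i = 1$ corresponding precisely to $m_3 = 6$ and $m_1 = 2$ (the cyclic cubics being excluded at negligible cost, cf.\ Remark~\ref{rem:nocyc}). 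Substituting $\mu_p(\hat\Sigma_p) = \bm{\mu}_p$ from the first step then yields the claimed formula.

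The only genuine work is the local mass identity $\mu_p(\hat\Sigma_p) = 1 - p^{-3}$: routine for tamely ramified $p$ via Serre's mass formula, but requiring the full mass formula for extensions of local fields at $p = 2, 3$. Its compatibility with the $\zeta(3)$ of Davenport--Heilbronn --- namely $\prod_p(1 - p^{-3}) = \zeta(3)^{-1}$ --- is exactly what forces the constants to match. The one conceptual input, that Bhargava's geometric sieve really does license arbitrary local conditions at the primes of $S$ while giving a uniform $o(X)$ tail over the remaining primes, is precisely the content of the cited theorem, so I expect no obstacle beyond bookkeeping.
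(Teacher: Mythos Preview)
Your proposal is correct and follows exactly the approach the paper indicates: the paper omits the proof entirely, saying only that it is ``extremely similar'' to that of Theorem~\ref{thm:quarticfldcount}, and your plan mirrors that proof step by step with rank~$3$ algebras and the Davenport--Heilbronn count in place of Lemma~27 of \cite{BhargavaQuartics}. Your local mass identity $\mu_p(\hat\Sigma_p)=1-p^{-3}$ is in fact already available in the paper as the sum of the three values $\mu_p(\Sigma_{p,r})$ computed in the proof of Lemma~\ref{lem:SigpAll}, so you need not invoke Serre's mass formula separately for the wild primes.
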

The proof of Theorem \ref{thm:cubicfldcount} is extremely similar to that of Theorem \ref{thm:quarticfldcount} so we omit it.

\section{Dealing With Local Conditions}
\label{sec:Local Conditions}

In general, if $K$ is an $\ST$-cubic field and $L$ an $\SF$-quartic field such that the resolvent $\Res(L)=K$, then $K \otimes \QQ_p$ does not determine $L \otimes \QQ_p$. However, if we further assume that $L \otimes \QQ_p$ has a $\QQ_p$ component, then this is no longer the case.

\begin{lemma}
\label{lem:addQp}
Let $K$ be an $\ST$-cubic field and $L$ an $\SF$-quartic field such that the resolvent $\Res(L)=K$. If $p$ is a prime such that $L \otimes \QQ_p$ has a component equal to $\QQ_p$, then \linebreak $L \otimes \QQ_p \simeq \left( K \otimes \QQ_p \right)  \oplus \QQ_p$.
\end{lemma}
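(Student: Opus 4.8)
The plan is to translate the statement into the representation theory of the decomposition group inside the Galois closure, and then to compare the orbit structures that govern $L \otimes \QQ_p$ and $K \otimes \QQ_p$. First I would pass to the Galois closure $N$ of $L/\QQ$, so that $\Gal(N/\QQ) \simeq \SF$ acts faithfully on the four embeddings of $L$, which I identify with the set $\{1,2,3,4\}$. Fixing a prime $P$ of $N$ above $p$, let $G_p = \Gal(N_P/\QQ_p) \le \SF$ be the decomposition group, well-defined up to conjugacy. The standard dictionary between primes above $p$ and orbits of the decomposition group then applies: the factors of $L \otimes \QQ_p$ correspond to the orbits of $G_p$ on $\{1,2,3,4\}$, where each factor has local degree equal to the size of its orbit and is the fixed field in $N_P$ of the $G_p$-stabilizer of a point of that orbit. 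Writing the resolvent as $K = \Res(L) = N^{D_4}$, where $D_4 \le \SF$ is the stabilizer of one of the three partitions of $\{1,2,3,4\}$ into two pairs, the same dictionary describes the factors of $K \otimes \QQ_p$ via the orbits of $G_p$ on the three pairings.

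Next I would exploit the hypothesis. A $\QQ_p$ component of $L \otimes \QQ_p$ is exactly a factor of local degree one, i.e. a fixed point of $G_p$ acting on $\{1,2,3,4\}$; after relabeling I may assume $G_p$ fixes the point $4$, so that $G_p \le \mathrm{Stab}(4) \simeq \ST$ and $G_p$ merely permutes $\{1,2,3\}$. I then label the three pairings $P_1, P_2, P_3$, where $P_i$ is the pairing that matches $i$ with $4$. Because $G_p$ fixes $4$, its action on $\{P_1, P_2, P_3\}$ is identified with its action on $\{1,2,3\}$, so the orbits on points and the orbits on pairings match up bijectively.

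The essential point, and the step I expect to be the main obstacle, is that the matched factors are not merely of equal degree but genuinely isomorphic as local algebras. For this I would prove the stabilizer identity $\mathrm{Stab}_{G_p}(i) = \mathrm{Stab}_{G_p}(P_i)$ for each $i \in \{1,2,3\}$: given $g \in G_p$, which already fixes $4$, the element $g$ fixes $i$ if and only if it fixes the pair $\{i,4\}$, and since $\{i,4\}$ is the unique pair of $P_i$ containing $4$, this holds if and only if $g$ fixes the pairing $P_i$. Consequently the local field attached to the orbit of $i$ (a factor of $L \otimes \QQ_p$) is $N_P^{\mathrm{Stab}_{G_p}(i)} = N_P^{\mathrm{Stab}_{G_p}(P_i)}$, which is precisely the matching factor of $K \otimes \QQ_p$; because these are literally the same subfield of $N_P$, the isomorphism automatically respects degree, residue degree, and ramification.

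Putting this together, the factors of $L \otimes \QQ_p$ arising from the $G_p$-orbits inside $\{1,2,3\}$ are in an isomorphism-preserving bijection with all of the factors of $K \otimes \QQ_p$, while the fixed point $4$ contributes exactly one additional factor equal to $\QQ_p$. This yields $L \otimes \QQ_p \simeq \left( K \otimes \QQ_p \right) \oplus \QQ_p$, as claimed. I emphasize that matching the degrees of the factors is immediate from the orbit correspondence; the real work is the stabilizer identity above, which upgrades the degree match to an algebra isomorphism and which relies on having normalized the distinguished fixed point $4$ so that it lies in one pair of every pairing $P_i$.
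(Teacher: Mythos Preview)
Your argument is correct and complete: the orbit--stabilizer dictionary you invoke is exactly the right tool, and the stabilizer identity $\mathrm{Stab}_{G_p}(i) = \mathrm{Stab}_{G_p}(P_i)$ is the crux, cleanly established by your observation that $4$ lies in the block $\{i,4\}$ of each pairing $P_i$.

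Your route differs from the paper's. Both begin by noting that a $\QQ_p$-component forces $G_p \le \ST$. From there the paper passes to the Galois closure $\widetilde{K}$ of $K$ inside $N$, observes that $\Gal(N/\widetilde{K})$ is the Klein four group $V$, and uses the structural fact $\ST \cap V = 1$ to conclude that every prime of $\widetilde{K}$ above $p$ splits completely in $N$; hence $N \otimes \QQ_p \simeq (\widetilde{K} \otimes \QQ_p)^4$, and taking $\Gal(N/L)$-invariants yields the result. You instead stay with the coset spaces $\SF/\ST \simeq \{1,2,3,4\}$ and $\SF/D_4 \simeq \{P_1,P_2,P_3\}$ and compare the $G_p$-actions directly. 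The paper's approach is slicker in that it isolates a single group-theoretic fact ($\ST \cap V = 1$) and then appeals to a one-line invariants computation, though that last step is compressed and would, if unpacked, amount to an orbit analysis close to yours. Your approach is more elementary and self-contained, and it makes the matching of individual local factors (including ramification and residue data) completely explicit via the literal equality $N_P^{\mathrm{Stab}_{G_p}(i)} = N_P^{\mathrm{Stab}_{G_p}(P_i)}$.
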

\begin{proof}
Let $N$ be the Galois closure of $L/\QQ$. Since $L \otimes \QQ_p$ has a component equal to $\QQ_p$, we have $\Gal(N_P/\QQ_p) \le \ST$ for any prime $P$ of $N$ above $P$. 

Let $\widetilde{K}$ be the Galois closure of $K/\QQ$ contained in $N/\QQ$ and let $\p$ be any prime above $p$ in $\widetilde{K}$. The Galois group of $N/\widetilde{K}$ is the unique order four normal subgroup $V$ of $\SF$. Therefore, if $P$ is any prime of $N$ above $\p$, we have $\Gal(N_P/\widetilde{K}_\p) \le V$. However, the subgroups $V$ and $\ST$ of $\SF$ intersect trivially. As a result, we have $\Gal(N_P/\widetilde{K}_\p) = 0$ and $\p$ splits completely in $N/\tilde K$.

We therefore see that $N \otimes \QQ_p = (\widetilde{K} \otimes \QQ_p )^4$. Taking $\Gal(N/L)$ invariants, we get that $L \otimes \QQ_p =  \left( K \otimes \QQ_p \right) \oplus \QQ_p$.
\end{proof}

Lemma \ref{lem:addQp} motivates the following definition. Given a rank 3 $\QQ_p$-algebra $R_p$, we define a rank 4 $\QQ_p$-algebra $\widetilde{R_p}$ as $\widetilde{R_p} = R_p \oplus \QQ_p$. We then get the following corollary.

\begin{proposition}
\label{prop:avgclassgroupsizeloc}
Let $\Sigma = (\Sigma_p)_{p\in S}$ where each $\Sigma_p$ is a set of rank 3 $\QQ_p$-algebras. Set  
$\widetilde{\mu}(\Sigma) 
=
\prod_{p \in S} {\mu_p(\widetilde{\Sigma_p})}/{\mu_p(\Sigma_p)},
$
where $\widetilde{\Sigma_p} = \{ \widetilde{R_p} : R_p \in \Sigma_p\}$. Then
\begin{enumerate}[(i)]
\item the average size of $\Cl(K)_S[2]$ as $K$ ranges over totally real $\ST$-cubic fields with \linebreak$K \otimes \QQ_p \in \Sigma_p$ for all $p \in S$ is equal to $1 + \frac{1}{4}\widetilde{\mu}(\Sigma)$
\item the average size of $\Cl(K)_S[2]$ as $K$ ranges over complex $\ST$-cubic fields with \linebreak $K \otimes \QQ_p \in \Sigma_p$ for all $p \in S$ is equal to, and $1 + \frac{1}{2}\widetilde{\mu}(\Sigma)$
\item the average size of $\Cl(K)^+_S[2]$ as $K$ ranges over totally real $\ST$-cubic fields  with \linebreak $K \otimes \QQ_p \in \Sigma_p$ for all $p \in S$ is equal to $1 + \widetilde{\mu}(\Sigma)$.
\end{enumerate}
\end{proposition}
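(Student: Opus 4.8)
The plan is to compute each of the three averages directly from its definition as a ratio of a sum of class-group sizes to a count of fields, feeding in the combinatorial identity of Corollary \ref{cor:DavCor for quotients}, the local structure result of Lemma \ref{lem:addQp}, and the two counting theorems. Fix one of the three families and write the average as $\lim_{X\to\infty}\big(\sum_K |\Cl(K)_S[2]|\big)/\#\{K\}$, where $K$ runs over the $\ST$-cubic fields in the family with $|\Disc(K)|<X$ and $K\otimes\QQ_p\in\Sigma_p$ for all $p\in S$. By Corollary \ref{cor:DavCor for quotients}, each summand $|\Cl(K)_S[2]|$ equals $1$ plus the number of nowhere overramified $\SF$-quartic fields $L$ with $\Res(L)=K$ satisfying the prescribed real-place and $\QQ_p$-component conditions. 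The $1$'s sum to $\#\{K\}$ and contribute the leading $1$ to every average; the remaining task is to count the $L$'s. Since each such $L$ has a unique resolvent $\Res(L)=K$, summing the per-$K$ count over the family is the same as counting all admissible $L$ whose resolvent lies in the family. The point that makes this a clean quartic count is that a nowhere overramified $L$ satisfies $\Disc(L)=\Disc(\Res(L))$, so the bound $|\Disc(K)|<X$ is identical to $|\Disc(L)|<X$; hence the numerator is precisely one of the quantities $N^{(i)}(X,\widetilde{\Sigma})$ counted by Theorem \ref{thm:quarticfldcount}.

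Next I would translate the local conditions into the conditions defining $\widetilde{\Sigma}$. For a nowhere overramified $L$ with $\Res(L)=K$ and a prime $p\in S$, I claim the pair of conditions ``$K\otimes\QQ_p\in\Sigma_p$ and $L\otimes\QQ_p$ has a component equal to $\QQ_p$'' is equivalent to the single condition ``$L\otimes\QQ_p\in\widetilde{\Sigma_p}$''. Indeed, if $K\otimes\QQ_p=R_p\in\Sigma_p$ and $L\otimes\QQ_p$ has a $\QQ_p$-component, then Lemma \ref{lem:addQp} gives $L\otimes\QQ_p\simeq R_p\oplus\QQ_p=\widetilde{R_p}$; conversely, if $L\otimes\QQ_p\simeq\widetilde{R_p}=R_p\oplus\QQ_p$ then $L\otimes\QQ_p$ visibly has a $\QQ_p$-component, Lemma \ref{lem:addQp} identifies it with $(K\otimes\QQ_p)\oplus\QQ_p$, and cancelling the common $\QQ_p$ factor (Krull--Schmidt for \'etale $\QQ_p$-algebras) yields $K\otimes\QQ_p\simeq R_p\in\Sigma_p$. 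I would also note that $\widetilde{R_p}=R_p\oplus\QQ_p$ is never overramified, so $\widetilde{\Sigma}$ is a legitimate family of local conditions for Theorem \ref{thm:quarticfldcount}.

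The one genuinely delicate bookkeeping step is matching archimedean data between $K$ and $L$, and it is again the equality $\Disc(L)=\Disc(K)$ that controls it. Comparing the sign of the discriminant with the signature shows that a totally real cubic resolvent ($3$ real places, positive discriminant) corresponds exactly to a quartic $L$ with $i\in\{0,4\}$ real places, while a complex cubic resolvent ($1$ real place, negative discriminant) corresponds exactly to $i=2$. Combining this with the real-place condition, which Corollary \ref{cor:DavCor for quotients} imposes for $\Cl(K)_S[2]$ but not for $\Cl(K)^+_S[2]$: in part (i) the admissible $L$ have a real place and a totally real resolvent, forcing $i=4$; in part (iii) the narrow group omits the real-place condition, allowing $i\in\{0,4\}$; and in part (ii), where $K$ is complex, the only option is $i=2$, for which a real place is automatic. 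Hence the numerators are $N^{(4)}(X,\widetilde{\Sigma})$, $N^{(0)}(X,\widetilde{\Sigma})+N^{(4)}(X,\widetilde{\Sigma})$, and $N^{(2)}(X,\widetilde{\Sigma})$, and the denominators are $M^{(3)}(X,\Sigma)$, $M^{(3)}(X,\Sigma)$, and $M^{(1)}(X,\Sigma)$ for parts (i), (iii), (ii) respectively.

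Finally I would substitute the asymptotics of Theorems \ref{thm:quarticfldcount} and \ref{thm:cubicfldcount} and pass to the limit. In every ratio the common factor $\tfrac{1}{2\zeta(3)}\prod_{p\in S}\bm{\mu}_p^{-1}$ and the $o(X)$ errors drop out, leaving $\prod_{p\in S}\mu_p(\widetilde{\Sigma_p})/\mu_p(\Sigma_p)=\widetilde{\mu}(\Sigma)$ times a ratio of the constants $n_i,m_i$. Using $n_0=8$, $n_4=24$, $m_3=6$ gives $\tfrac{1/24}{1/6}=\tfrac14$ in part (i) and $\big(\tfrac{1}{8}+\tfrac{1}{24}\big)/\tfrac16=1$ in part (iii), while $n_2=4$, $m_1=2$ gives $\tfrac{1/4}{1/2}=\tfrac12$ in part (ii). Adding back the leading $1$ produces $1+\tfrac14\widetilde{\mu}(\Sigma)$, $1+\tfrac12\widetilde{\mu}(\Sigma)$, and $1+\widetilde{\mu}(\Sigma)$, as claimed. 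The main obstacle is conceptual rather than computational: it is recognizing that ``nowhere overramified'' simultaneously matches the discriminant bounds and pins down the signature correspondence, so that the quotient-class-group sum collapses to a ratio of the two field counts.
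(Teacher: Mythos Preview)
Your proposal is correct and follows essentially the same approach as the paper: combine Corollary~\ref{cor:DavCor for quotients} with Lemma~\ref{lem:addQp} to rewrite the class-group sum as $M^{(i)}(X,\Sigma)$ plus the relevant $N^{(j)}(X,\widetilde{\Sigma})$, then divide and apply Theorems~\ref{thm:quarticfldcount} and~\ref{thm:cubicfldcount}. The paper's proof is quite terse and simply asserts the key identities, whereas you have spelled out the discriminant-sign/signature matching, the two-way equivalence of the local conditions (including the Krull--Schmidt cancellation), and the explicit arithmetic with the constants $n_i,m_i$; these are exactly the details the paper suppresses.
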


\begin{proof}
By combining Corollary \ref{cor:DavCor for quotients} with Lemma \ref{lem:addQp}, we have
$$\sum_{\substack{K \text{ cubic, } \\ K\otimes \QQ_p \simeq R_p \text{ for all } p\in S \\  0 < \Disc(K) < X}} |\Cl(K)_S[2]| = M^{(3)}(X, \Sigma) + N^{(4)}(X,\widetilde{\Sigma}).$$ Part (i) then follows from Theorems \ref{thm:quarticfldcount} and \ref{thm:cubicfldcount}. Part (ii) follows from an essentially identical calculation.

For part (iii), again by Corollary \ref{cor:DavCor for quotients} combined with Lemma \ref{lem:addQp}, we have
$$\sum_{\substack{K \text{ cubic, } \\ K\otimes \QQ_p \simeq R_p \text{ for all } p\in S \\  0 < \Disc(K) < X}} |\Cl(K)^+_S[2]| = M^{(3)}(X, \Sigma) + N^{(4)}(X,\widetilde{\Sigma}) +  N^{(0)}(X,\widetilde{\Sigma}).$$ The result then follows from Theorems \ref{thm:quarticfldcount} and \ref{thm:cubicfldcount}.
\end{proof}

The quotients $\mu_p(\widetilde{\Sigma_p})/{\mu_p(\Sigma_p)}$ have a nice formula when either $\Sigma_p = \{ R_p \}$ or $\Sigma_p$ contains all rank 3 $\QQ_p$-algebras (up to isomorphism).

\begin{lemma}
\label{lem:SigpRp}
If $\Sigma_p = \{ R_p \}$, then $\mu_p(\widetilde{\Sigma_p})/{\mu_p(\Sigma_p)} = 2^{-(r_p - 1)}$, where $R_p$ is the number of irreducible components of $R_p$.
\end{lemma}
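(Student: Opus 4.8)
The plan is to evaluate the ratio directly from the definition of $\mu_p$. Since both $\Sigma_p = \{R_p\}$ and $\widetilde{\Sigma_p} = \{\widetilde{R_p}\}$ are singletons, the common factor $\frac{p-1}{p}$ cancels and we are left with
\[
\frac{\mu_p(\widetilde{\Sigma_p})}{\mu_p(\Sigma_p)}
=
\frac{\Disc_p(R_p)}{\Disc_p(\widetilde{R_p})}
\cdot
\frac{|\Aut(R_p)|}{|\Aut(\widetilde{R_p})|}.
\]
So the computation splits into understanding how the $p$-part of the discriminant and the order of the automorphism group each change when one adjoins a $\QQ_p$ summand, and then reassembling the two factors.

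First I would dispose of the discriminant. Because $R_p$ is étale (it arises as a completion $K \otimes \QQ_p$), it is a product of separable field extensions of $\QQ_p$, and the trace form of a direct sum of such algebras is the orthogonal direct sum of the individual trace forms. Hence $\Disc_p$ is multiplicative, and since $\Disc_p(\QQ_p) = 1$ we get $\Disc_p(\widetilde{R_p}) = \Disc_p(R_p \oplus \QQ_p) = \Disc_p(R_p)$. The discriminant factor is therefore exactly $1$ and drops out.

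Next I would compute the automorphism factor. Writing $R_p$ as a product of fields and letting $k$ be the number of its factors isomorphic to $\QQ_p$, the automorphism group of an étale $\QQ_p$-algebra permutes isomorphic factors while acting within each by its own field automorphisms. As $\Aut_{\QQ_p}(\QQ_p)$ is trivial, the $k$ copies of $\QQ_p$ contribute only the symmetric-group factor $k!$, and all other factors are common to $R_p$ and $\widetilde{R_p}$. Passing to $\widetilde{R_p} = R_p \oplus \QQ_p$ raises this multiplicity to $k+1$, so $|\Aut(\widetilde{R_p})| / |\Aut(R_p)| = (k+1)!/k! = k+1$ and the automorphism factor is $1/(k+1)$.

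It then remains to match $1/(k+1)$ with $2^{-(r_p-1)}$, which is where the only real care is needed: this is a numerical identity special to rank $3$ rather than a structural formula. Enumerating the partitions of $3$ into the ranks of irreducible factors — noting that two or more factors of rank $\geq 2$ would already exceed rank $3$ — forces $(r_p,k)$ to be one of $(1,0)$, $(2,1)$, or $(3,3)$, corresponding to a cubic field, a quadratic field times $\QQ_p$, or $\QQ_p^3$ respectively. In each case $k+1 = 2^{r_p-1}$, so the ratio equals $1 \cdot 2^{-(r_p-1)}$, as claimed. The main thing to watch is precisely this last bookkeeping; the rest is routine once one records that $\Aut_{\QQ_p}(\QQ_p)$ is trivial and that $\Disc_p(\QQ_p)=1$.
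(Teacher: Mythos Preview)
Your argument is correct and follows essentially the same route as the paper's proof: both observe that $\Disc_p(\widetilde{R_p}) = \Disc_p(R_p)$, reduce the ratio to $|\Aut(R_p)|/|\Aut(\widetilde{R_p})| = 1/(k+1)$ where $k$ is the number of $\QQ_p$-factors, and then check by inspection over the three possible shapes of a rank~$3$ étale $\QQ_p$-algebra that $k+1 = 2^{r_p-1}$. Your version is simply more explicit about why the automorphism count grows by a factor of $k+1$ and spells out the case enumeration that the paper summarizes as ``examination shows.''
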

\begin{proof}
Since $\widetilde{R_p} = R_p \oplus \QQ_p$, we have $\Disc_p(\widetilde{R_p}) = \Disc_p(R_p)$, so $$\mu_p(\{\widetilde{R_p}\})/{\mu_p(\{ R_p \})}= |\Aut(R_p)|/|\Aut(\widetilde R_p)| = n!/(n+1)! = 1/(n+1),$$ where $n$ is the number of components of $R_p$ equal to $\QQ_p$. Examination shows that this is equal to $2^{-(r_p-1)}$.
\end{proof}

\pagebreak
\begin{lemma}
\label{lem:SigpAll}
If $\Sigma_p$ contains all rank 3 $\QQ_p$-algebras, then $\mu_p(\widetilde{\Sigma_p})/{\mu_p(\Sigma_p)} =\frac{1}{2}\left( 1 + \frac{p^2+4}{4(p^2 + p + 1)}\right)$.
\end{lemma}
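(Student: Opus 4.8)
The plan is to expand both $\mu_p(\widetilde{\Sigma_p})$ and $\mu_p(\Sigma_p)$ as sums over the isomorphism classes of étale rank $3$ $\QQ_p$-algebras $R$ (the algebras occurring as $K \otimes \QQ_p$), and to exploit that passing from $R$ to $\widetilde{R} = R \oplus \QQ_p$ leaves the discriminant unchanged while multiplying the automorphism count by $2^{r_p - 1}$, exactly as recorded in the proof of Lemma \ref{lem:SigpRp}. Writing $w_R = \frac{1}{\Disc_p(R)|\Aut(R)|}$ and letting $r_p = r_p(R)$ denote the number of irreducible components of $R$, the common factor $\frac{p-1}{p}$ cancels and one is left with
$$\frac{\mu_p(\widetilde{\Sigma_p})}{\mu_p(\Sigma_p)} = \frac{\sum_R 2^{-(r_p - 1)} w_R}{\sum_R w_R},$$
a weighted average of the quantities $2^{-(r_p-1)} \in \{1, \tfrac12, \tfrac14\}$.

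Next I would stratify the rank $3$ étale algebras by $r_p \in \{1,2,3\}$: the cubic fields ($r_p = 1$, factor $1$), the algebras $\QQ_p \oplus E$ with $E$ a quadratic field ($r_p = 2$, factor $\tfrac12$), and the split algebra $\QQ_p^3$ ($r_p = 3$, factor $\tfrac14$). For $\QQ_p^3$ one has $w = \tfrac16$. For the $r_p = 2$ stratum, since $\Disc_p(\QQ_p \oplus E) = \Disc_p(E)$ and $|\Aut(\QQ_p \oplus E)| = 2$, its total weight is the quadratic-field mass $Q := \sum_E \tfrac{1}{2\,\Disc_p(E)}$; write $C$ for the total weight of the cubic stratum. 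The key inputs are then the two field-mass computations, and both follow from Serre's mass formula for totally ramified extensions. After converting Serre's count of totally ramified subfields of $\overline{\QQ}_p$ into an $|\Aut|$-weighted sum over isomorphism classes (a field of type $[L]$ accounts for $[L:\QQ_p]/|\Aut(L)|$ such subfields, all of equal discriminant), one gets $\sum_{[L]\ \mathrm{tot.\ ram.},\ [L:\QQ_p]=n} \frac{1}{|\Aut(L)|\Disc_p(L)} = p^{-(n-1)}$. Adding the unique unramified extension of each degree (Galois, $\Disc_p = 1$, $|\Aut| = n$) yields $Q = \tfrac12 + \tfrac1p$ and $C = \tfrac13 + \tfrac1{p^2}$, uniformly in $p$.

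Assembling the three strata then gives
$$\sum_R w_R = \tfrac16 + Q + C = 1 + \tfrac1p + \tfrac1{p^2}, \qquad \sum_R 2^{-(r_p - 1)} w_R = \tfrac14 \cdot \tfrac16 + \tfrac12 Q + C = \tfrac58 + \tfrac1{2p} + \tfrac1{p^2}.$$
Dividing and clearing $p^2$ turns the ratio into $\dfrac{5p^2 + 4p + 8}{8(p^2 + p + 1)}$, and a one-line algebraic check confirms this equals $\tfrac12\bigl(1 + \tfrac{p^2+4}{4(p^2+p+1)}\bigr)$, which finishes the proof.

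The main obstacle is the uniform evaluation of the field masses at the wild primes $p = 3$ (for $C$) and $p = 2$ (for $Q$), where a naive enumeration of ramified extensions is delicate; this is precisely why I would route the computation through Serre's mass formula rather than listing fields, the only subtlety being the $|\Aut|$-bookkeeping in the conversion above. Alternatively, since the total mass $\sum_R w_R = 1 + p^{-1} + p^{-2}$ (equivalently $\bm{\mu}_p = 1 - p^{-3}$) is already built into the normalization of Theorem \ref{thm:cubicfldcount}, one can sidestep the cubic mass formula entirely and recover $C = (1 + p^{-1} + p^{-2}) - \tfrac16 - Q$ by subtraction, leaving only the elementary quadratic-field mass $Q$ to be computed directly.
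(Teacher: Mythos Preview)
Your proposal is correct and follows essentially the same route as the paper: stratify the rank~$3$ étale $\QQ_p$-algebras by the number of irreducible components $r_p\in\{1,2,3\}$, use Lemma~\ref{lem:SigpRp} to introduce the factor $2^{-(r_p-1)}$, compute the mass of each stratum, and combine. The only difference is cosmetic---you strip off the common factor $\tfrac{p-1}{p}$ and work with $w_R$, and you explicitly justify the stratum masses $C=\tfrac13+\tfrac1{p^2}$ and $Q=\tfrac12+\tfrac1p$ via Serre's mass formula, whereas the paper simply records the values $\mu_p(\Sigma_{p,r})$ without derivation.
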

\begin{proof}
By Lemma \ref{lem:SigpRp}, we have $\mu_p (\{ \widetilde{R_p} \})/\mu_p(\{ R_p \}) = 2^{-(r-1)}$ for each $R_p$ with $r$ irreducible components. Letting $\Sigma_{p,r}$ denote the set of all rank 3 $\QQ_p$-algebras with $r$ irreducible components, we get
$$\mu_p(\widetilde{\Sigma_p}) = \sum_{R_p \in \Sigma_p} \mu_p(\widetilde{\Sigma_{p,r}})= \sum_{r = 1}^3  2^{-(r-1)}\mu_p(\Sigma_{p,r}).$$ 
Calculating $\mu_p(\Sigma_{p,r})$ for each $r \in \{1,2,3\}$, we find $$\mu_p(\Sigma_{p,1}) = \frac{p^3 - p^2 + 3p -3}{3p^3},  \mu_p(\Sigma_{p,2}) = \frac{p^2+p-2}{2p^2}, \text{ and } \mu_p(\Sigma_{p,3}) =\frac{p-1}{6p}.$$ As a result, we get $\mu_p(\widetilde{\Sigma_p}) = \frac{p^3 - p^2 + 4p - 8}{8p^3}$, so $$\mu_p(\widetilde{\Sigma_p})/{\mu_p(\Sigma_p)} = \frac{p^3 - p^2 + 4p - 8}{8p^3} \cdot \frac{p^3}{p^3-1} =  \frac{5p^2 + 4p + 8}{8(p^2+p+1)} = \frac{1}{2}\left( 1 + \frac{p^2+4}{4(p^2 + p + 1)}\right).$$
\end{proof}

We are now able to prove Theorems \ref{thm:DH for quotients of cubics} and \ref{thm:DH for quotients of cubics local}.

\begin{proof}[Proof of Theorem \ref{thm:DH for quotients of cubics}]
For each $p \in S$, let $\Sigma_p$ be the set of all rank 3 $\QQ_p$ algebras. The result then follows from combining Proposition \ref{prop:avgclassgroupsizeloc} with Lemma \ref{lem:SigpAll}.
\end{proof}

Corollary \ref{cor:posprop} will now follow from Theorem \ref{thm:DH for quotients of cubics}.

\begin{proof}[Proof of Corollary \ref{cor:posprop}]
Let $\lambda$ be the proportion of totally real $\ST$-cubic fields $K$ having \linebreak $\Cl(K)_S^+[2]~=~0$. We then have $\Avg(\Cl(K)_S^+[2] ) \ge \lambda + 2\cdot (1-\lambda) = 2 - \lambda$. By Theorem \ref{thm:DH for quotients of cubics}, we have $$\Avg(\Cl(K)_S^+[2]) = \displaystyle{
1 + 
\frac{1}{2^{|S|}}
\prod_{p \in S} 
\left(
1  
+
\frac{p^2+4}{4(p^2+p+1)}
\right)
},$$ so it follows that 
\begin{equation}
\label{eq:lambdaineq}
\lambda \ge 1 - 
\frac{1}{2^{|S|}}
\prod_{p \in S} 
\left(
1  
+
\frac{p^2+4}{4(p^2+p+1)}
\right).
\end{equation}
If $S$ is non-empty, then the right-hand side of \eqref{eq:lambdaineq} is at least $5/14$, proving (i).

To see (ii), observe that the kernel of the surjection $\Cl(K)_S^+ \rightarrow \Cl(K)_S$ is a subgroup of $\Cl(K)_S^+$ having order at most two. As a result, if $\Cl(K)_S^+$ has odd order, then\linebreak $\Cl(K)_S^+ \nobreak=\nobreak \Cl(K)_S$. The result then follows from (i).

Finally, we note that $K$ has $S_K$-units of all signatures if and only if $\Cl(K)_S^+ = \Cl(K)_S$. Part (iii) of the corollary therefore follows from (ii).
\end{proof}

\begin{proof}[Proof of Theorem \ref{thm:DH for quotients of cubics local}]
For each $p \in S$, let $\Sigma_p = \{ R_p\}$. By Lemma \ref{lem:SigpRp}, for each $p \in S$, we have $\mu_p(\widetilde{\Sigma_p})/{\mu_p(\Sigma_p)}\nobreak=\nobreak2^{-(r_p - 1)}$. Letting $r = \sum_{p \in S} (r_p - 1)$, we have $\widetilde{\mu}(\Sigma) = 2^{-r}$ and the result follows from Proposition \ref{prop:avgclassgroupsizeloc}.
\end{proof}

\section{Selmer Groups}
\label{sec:Selmer Groups}

\begin{definition}
\label{def:Relaxed Selmer}
Let $S$ be a set of rational primes. The $2$-Selmer group of $K$ relaxed at $S$, denoted $\Sel_2^S(K)$ is defined as 
\[
\Sel_2^S(K) := 
\left \{
\alpha \in K^\times/(K^\times)^2
:
\val_\p(\alpha) \equiv 0 \pmod{2}
\text{ for all }
\p \not \in S_K
\right \}.
\]
where $S_K$ is the set of primes of $K$ lying about $S$.
\end{definition}

A standard result (see Section 8.3.2 of \cite{Cohen-Number Theory vol 1}, for example) shows that $\Sel_2^S(K)$ sits in the short exact sequence
\begin{equation}
\label{eq:Selmer sequence}
0 \rightarrow 
\cO_{K,S}^\times/(\cO_{K,S}^\times)^2
\rightarrow 
\Sel_2^S(K)
\rightarrow
\Cl(K)_S[2]
\rightarrow 0,
\end{equation}
where $\cO_{K,S}^\times$ is the $S_K$ units of $\cO_K$.

Define a function $\nu_{S}$ on $\ST$-cubic fields by $\nu_S(K) = \sum_{p \in S} r_p(K)$, where $r_p(K)$ is the number of irreducible components of $K \otimes \QQ_p$. Dirichlet's unit theorem then tells us that $|\Sel_2^S(K)|\nobreak=\nobreak2^{\nu_S(K) + 3} |\Cl(\cO_K)_S[2]|$ if $K$ is totally real  and $|\Sel_2^S(K)| = 2^{\nu_S(K) + 2}\nobreak|\Cl(\cO_K)_S[2]|$ if $K$ is complex.

\subsection{An averaging result for $\nu_S(K)$}

To compute the average size of $\Sel_2^S(K)$, we would there like to calculate the average value of $2^{\nu_S(K)}|\Cl(K)_S[2]|$. We begin with the following lemma.

\begin{lemma}
\label{lem:avglem}
When ordered by absolute discriminant, the average value of $2^{\nu_S(K)}$ as  $K$ ranges over totally real (resp. complex) $\ST$-cubic fields is $\displaystyle{2^{|S|}\prod_{p \in S} \left (2 - \frac{1}{p^2 + p + 1} \right )}$.
\end{lemma}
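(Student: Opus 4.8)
The plan is to compute the average of $2^{\nu_S(K)}$ directly as a weighted count, using the field-counting machinery of Theorem \ref{thm:cubicfldcount}. Since $\nu_S(K) = \sum_{p \in S} r_p(K)$ factors as a sum over primes, we have $2^{\nu_S(K)} = \prod_{p \in S} 2^{r_p(K)}$, where $r_p(K)$ depends only on $K \otimes \QQ_p$. Thus $2^{\nu_S(K)}$ is a genuinely local weight, and I expect the average to factor into a product of local averages, one for each $p \in S$.

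First I would fix, for each $p \in S$, the decomposition type of $p$ in $K$ by taking $\Sigma_p = \{R_p\}$ for a single rank $3$ $\QQ_p$-algebra $R_p$, and let $\Sigma = (\Sigma_p)_{p \in S}$. For cubic fields $K$ with $K \otimes \QQ_p \simeq R_p$ for all $p \in S$, the value $2^{\nu_S(K)} = \prod_{p \in S} 2^{r_p}$ is constant, so the contribution of such fields to the sum $\sum_{0 < \Disc(K) < X} 2^{\nu_S(K)}$ is exactly $\left(\prod_{p\in S} 2^{r_p}\right) M^{(i)}(X, \Sigma)$. Summing over all choices of local algebras $R_p$ recovers the full family, and by Theorem \ref{thm:cubicfldcount} the density of each sub-family is proportional to $\prod_{p \in S} \mu_p(\Sigma_p)/\bm{\mu}_p$. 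Dividing by the total count $M^{(i)}(X, (\hat\Sigma_p)_{p\in S})$ (where $\hat\Sigma_p$ ranges over all rank $3$ algebras), the $X$ and the constant $1/(2m_i\zeta(3))$ factors cancel, leaving
\begin{equation}
\label{eq:avgnu}
\Avg\left(2^{\nu_S(K)}\right) = \prod_{p \in S} \frac{1}{\bm{\mu}_p}\sum_{R_p} 2^{r_p}\,\mu_p(\{R_p\}),
\end{equation}
a product of purely local sums, one per prime in $S$, independent of the signature $i$.

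The main computational step is then to evaluate, for each prime $p$, the local sum $\sum_{R_p} 2^{r_p}\mu_p(\{R_p\})$ over all rank $3$ $\QQ_p$-algebras $R_p$. Grouping by the number $r$ of irreducible components, this is $\sum_{r=1}^{3} 2^{r}\,\mu_p(\Sigma_{p,r})$, where the quantities $\mu_p(\Sigma_{p,r})$ are exactly those already computed in the proof of Lemma \ref{lem:SigpAll}: namely $\mu_p(\Sigma_{p,1}) = (p^3 - p^2 + 3p - 3)/(3p^3)$, $\mu_p(\Sigma_{p,2}) = (p^2 + p - 2)/(2p^2)$, and $\mu_p(\Sigma_{p,3}) = (p-1)/(6p)$. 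Substituting these and dividing by $\bm{\mu}_p = 1 - p^{-3}$ gives the local factor $2\left(2 - \frac{1}{p^2+p+1}\right)$, and multiplying across $p \in S$ in \eqref{eq:avgnu} yields the claimed value $2^{|S|}\prod_{p\in S}\left(2 - \frac{1}{p^2+p+1}\right)$.

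The main obstacle is purely the arithmetic simplification of the local sum: verifying that $\frac{1}{1 - p^{-3}}\sum_{r=1}^3 2^r \mu_p(\Sigma_{p,r})$ collapses to the clean closed form $2\left(2 - \frac{1}{p^2+p+1}\right)$ requires care with the common denominator $p^3 = (p-1)(p^2+p+1) \cdot \frac{p^3}{p^3-1}$-type cancellations, but this mirrors the computation already carried out in Lemma \ref{lem:SigpAll} and presents no conceptual difficulty. A minor point to confirm is that the signature-dependent constants $m_i$ genuinely cancel in the ratio, which is why the average is the same for totally real and complex cubic fields; this follows because $2^{\nu_S(K)}$ is a local weight at the finite primes in $S$ and is insensitive to the real place.
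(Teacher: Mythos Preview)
Your proposal is correct and rests on the same two ingredients as the paper's proof: the product structure of the cubic field densities in Theorem~\ref{thm:cubicfldcount}, and the local mass values $\mu_p(\Sigma_{p,r})$ already recorded in Lemma~\ref{lem:SigpAll}. The only organisational difference is that the paper argues by induction on $|S|$---treating the base case $S=\{p\}$ with exactly your local computation, and then at the inductive step invoking Theorem~\ref{thm:cubicfldcount} to assert that $r_{p'}$ is independent of $\nu_S(K)$---whereas you sum over all tuples $(R_p)_{p\in S}$ at once and factor the resulting sum directly into the product~\eqref{eq:avgnu}. Your route is slightly more streamlined; the paper's induction makes the independence at each prime more explicit. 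Both unwind to the identical local calculation $\frac{1}{\bm{\mu}_p}\sum_{r=1}^3 2^r\mu_p(\Sigma_{p,r}) = 2\bigl(2 - \tfrac{1}{p^2+p+1}\bigr)$, which is the paper's equation~\eqref{eq:avgeq3}.
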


\begin{proof}
We will only prove the totally real case, since the complex case is identical.  We proceed by induction on the set $S$. For the base case, consider the case where $S$ contains a single prime $p$.

For each $r \in \{1,2,3\}$, define $\rho_p(r)$ to be the proportion of totally real $\ST$-cubics $K$ such that $r_p = r$. By Theorem \ref{thm:cubicfldcount}, we have $\rho_p(r) = \frac{\mu_p(\Sigma_{p,r})}{\bm{\mu}_p}$ where $\Sigma_{p,r}$ is as in the proof of Lemma \ref{lem:SigpAll} and $\bm{\mu}_p = 1 - \frac{1}{p^3}$. We then get
\begin{multline}
\label{eq:avgeq3}
\Avg\left (2^{r_p}\right ) 
=  \sum_{r = 1}^{3} 2^{r} \cdot \rho_p(r) 
= \frac{p^3}{p^3-1} \left ( 2 \cdot \frac{p^3 - p^2 + 3p -3}{3p^3} +  4 \cdot \frac{p^2+p-2}{2p^2} +  8 \cdot \frac{p-1}{6p}\right ) 
\\
= 4 - \frac{2}{p^2 + p + 1}= 2 \cdot  \left (2 - \frac{1}{p^2 + p + 1} \right ) .
\end{multline}

For the inductive step, let $S^\prime = S \cup  \{p^\prime\}$ for some prime $p^\prime \not \in S$. We then have 
\begin{equation}
\Avg\left(2^{\nu_{S^\prime}(K)}\right) 
= \sum_{s} \left( 2^s \cdot \Prob(\nu_S(K) = s) \cdot  \sum_{n = 1}^3 2^r \cdot \Prob\left (r_{p^\prime} = r | \nu_S(K) = s \right) \right ).
\end{equation}

By Theorem \ref{thm:cubicfldcount}, the splitting type of $K \otimes \QQ_{p^\prime}$ is independent from the splitting type of $K \otimes \QQ_{p}$ for all $p \in S$, and therefore independent of $\nu_S(K)$. As a result, we get
\begin{multline}
\label{eq:indstep}
\Avg\left(2^{\nu_{S^\prime}(K)}\right) 
=\left( \sum_{s}2^{s} \cdot \Prob(\nu_S(K) = s)\right )\left(\sum_{n = 1}^3 2^r \cdot \rho_{p^\prime}(r) \right )
\\
=\Avg\left(2^{\nu_{S}(K)}\right)  \left(\sum_{n = 1}^3 2^r \cdot \rho_{p^\prime}(r) \right) 
= \Avg\left(2^{\nu_{S}(K)}\right)  \cdot 2 \cdot  \left (2 - \frac{1}{{p^\prime}^2 + p^\prime + 1} \right ),
\end{multline}
where the final equality follows from \eqref{eq:avgeq3}. By the inductive hypothesis, we have
\begin{equation}
\label{eq:indhyp}
\Avg\left(2^{\nu_{S}(K)}\right) = 2^{|S|} \prod_{p \in S} \left (2 - \frac{1}{p^2 + p + 1} \right ) 
\end{equation}
Combining \eqref{eq:indstep} with \eqref{eq:indhyp} yields the result.
\end{proof}

\subsection{Averaging for fixed $\nu_S(K)$}

In the setting of Theorem \ref{thm:DH for quotients of cubics local}, we are given a fixed  rank 3 $\QQ_p$-algebra $R_p$ for each $p \in S$ and we are able to compute the average size of of $\Cl(K)_S[2]$ as we range over $\ST$-cubic fields $K$ with a fixed signature such that $K \otimes \QQ_p \simeq R_p$ for every $p \in S$.

As seen in Theorem \ref{thm:DH for quotients of cubics local}, the average size does not depend on the collection of $R_p$, only on $r = \sum_{p \in S} (r_p - 1)$. As a result, we get the following:

\begin{proposition}
\label{prop:fixeds}
Let $s$ be an integer with $|S| \le s \le 3|S|$.
\begin{enumerate}[(i)]
\item The average size of $\Cl(K)_S[2]$ as $K$ ranges over totally real $\ST$-cubic fields with \linebreak $\nu_S(K)\nobreak=\nobreak s$ is equal to $1 + 2^{-(s -|S| + 2)}$,
\item the average size of $\Cl(K)_S[2]$ as $K$ ranges over complex $\ST$-cubic fields with \linebreak $\nu_S(K)\nobreak=\nobreak s$ is equal to $1 + 2^{-(s -|S| + 1)}$, and
\item the average size of $\Cl(K)^+_S[2]$ as $K$ ranges over totally real $\ST$-cubic fields with \linebreak $\nu_S(K)\nobreak=\nobreak s$ is equal to $1 + 2^{-(s -|S|)}$.
\end{enumerate}
\end{proposition}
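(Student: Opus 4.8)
The plan is to deduce this directly from Theorem \ref{thm:DH for quotients of cubics local}, by observing that fixing $\nu_S(K)$ amounts to fixing the total number of local components $\sum_{p\in S} r_p$, and that the average computed there depends on the local data only through that total. Concretely, for a $\ST$-cubic field $K$ with $K \otimes \QQ_p \simeq R_p$ for each $p \in S$, we have $\nu_S(K) = \sum_{p \in S} r_p$, where $r_p$ is the number of irreducible components of $R_p$; hence the quantity $r = \sum_{p \in S}(r_p - 1)$ appearing in Theorem \ref{thm:DH for quotients of cubics local} equals $\nu_S(K) - |S|$. Thus $K$ satisfies $\nu_S(K) = s$ precisely when its tuple of local algebras lies in the finite set $\mathcal{R}_s$ of tuples $\mathbf{R} = (R_p)_{p \in S}$ of rank $3$ $\QQ_p$-algebras with $\sum_{p \in S} r_p = s$, equivalently with $r = s - |S|$. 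Since each $r_p$ ranges over $\{1,2,3\}$, the set $\mathcal{R}_s$ is nonempty exactly for $|S| \le s \le 3|S|$, which accounts for the stated range on $s$.

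I would then partition the totally real $\ST$-cubic fields with $\nu_S(K) = s$ into the finitely many subfamilies indexed by $\mathbf{R} \in \mathcal{R}_s$, according to the isomorphism type of $(K \otimes \QQ_p)_{p \in S}$. Theorem \ref{thm:DH for quotients of cubics local}(i) applies to each subfamily and produces the \emph{same} limiting average $c = 1 + 2^{-(r+2)} = 1 + 2^{-(s-|S|+2)}$, precisely because that average depends on $\mathbf{R}$ only through $r = s - |S|$. Writing $A_{\mathbf{R}}(X)$ for the set of such fields of discriminant below $X$ with local type $\mathbf{R}$, and $\Sigma_{\mathbf{R}} = (\{R_p\})_{p \in S}$, Proposition \ref{prop:avgclassgroupsizeloc} gives $\sum_{K \in A_{\mathbf{R}}(X)} |\Cl(K)_S[2]| = c \cdot |A_{\mathbf{R}}(X)| + o(X)$, while $|A_{\mathbf{R}}(X)| = M^{(3)}(X, \Sigma_{\mathbf{R}})$ grows like a positive constant times $X$ by Theorem \ref{thm:cubicfldcount}. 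The complex and narrow cases are handled identically, using parts (ii) and (iii) with offsets $r+1$ and $r$.

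Finally, I would aggregate over $\mathbf{R} \in \mathcal{R}_s$. Since $\mathcal{R}_s$ is finite and each subfamily has positive density, summing numerators and denominators separately yields
$$
\frac{\sum_{\mathbf{R} \in \mathcal{R}_s} \sum_{K \in A_{\mathbf{R}}(X)} |\Cl(K)_S[2]|}{\sum_{\mathbf{R} \in \mathcal{R}_s} |A_{\mathbf{R}}(X)|}
= \frac{c \sum_{\mathbf{R} \in \mathcal{R}_s} |A_{\mathbf{R}}(X)| + o(X)}{\sum_{\mathbf{R} \in \mathcal{R}_s} |A_{\mathbf{R}}(X)|} \longrightarrow c ,
$$
which is exactly the average over all fields with $\nu_S(K) = s$. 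The step I would treat as the main obstacle is this last aggregation: one must verify that there are only finitely many subfamilies and that each has positive density, so that the common subfamily average $c$ persists as the average of the union instead of being absorbed into the error term. Both facts follow from Theorem \ref{thm:cubicfldcount} — there are finitely many rank $3$ $\QQ_p$-algebras up to isomorphism, and each local condition cuts out a set of positive density — so the argument goes through with no further input.
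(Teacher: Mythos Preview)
Your proposal is correct and follows essentially the same approach as the paper: the key observation is that $r = \sum_{p\in S}(r_p-1) = \nu_S(K) - |S|$, so the conclusion of Theorem~\ref{thm:DH for quotients of cubics local} depends on the local data only through $\nu_S(K)$. The paper's proof records this identity and says the result ``follows directly'' from Theorem~\ref{thm:DH for quotients of cubics local}; you have simply made explicit the finite partition into local types and the aggregation step that justifies passing from fixed $(R_p)_{p\in S}$ to fixed $\nu_S(K)=s$.
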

\begin{proof}
We have $\sum_{p \in S} (r_p - 1) = \nu_S(K) - |S|$. The result then follows directly from Theorem~\ref{thm:DH for quotients of cubics local}.
\end{proof}

We then get the following corollary:

\begin{corollary}
\label{cor:avsselnu}
Let $s$ be an integer with $|S| \le s \le 3|S|$.  When ordered by absolute discriminant,
\begin{enumerate}[(i)]
\item the average value of $2^{s}|\Cl(K)_S[2]|$ as $K$ ranges over totally real cubic fields with \linebreak $\nu_S(K)\nobreak=\nobreak s$ is equal to $2^{s} + 2^{|S|-2}$,
\item the average value of $2^{s}|\Cl(K)_S[2]|$ as $K$ ranges over complex cubic fields with \linebreak $\nu_S(K)\nobreak=\nobreak s$ is equal to $2^{s} + 2^{|S|-1}$, and
\item the average value of $2^{s}|\Cl(K)_S^+[2]|$ as $K$ ranges over totally real cubic fields with \linebreak $\nu_S(K)\nobreak=\nobreak s$ is equal to $2^{s} + 2^{|S|}$.
\end{enumerate}
\end{corollary}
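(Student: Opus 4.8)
The plan is to exploit the fact that we are averaging over the subfamily of $\ST$-cubic fields on which $\nu_S(K)$ is pinned to the single value $s$. On this subfamily the factor $2^{\nu_S(K)} = 2^s$ is \emph{constant}, so it pulls straight out of the averaging operation. Writing $\Avg_{\nu_S = s}$ for the average taken over the relevant family (totally real or complex) of fields $K$ with $\nu_S(K) = s$, ordered by absolute discriminant, I would first record the identity
$$
\Avg_{\nu_S = s}\left( 2^{s} |\Cl(K)_S[2]| \right) = 2^{s} \cdot \Avg_{\nu_S = s}\left( |\Cl(K)_S[2]| \right),
$$
together with the analogous identity having $\Cl(K)_S^+[2]$ in place of $\Cl(K)_S[2]$.

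The second step is to substitute the values of the inner average supplied by Proposition \ref{prop:fixeds}. For part (i) this gives $2^s\left(1 + 2^{-(s-|S|+2)}\right)$; for part (ii) it gives $2^s\left(1 + 2^{-(s-|S|+1)}\right)$; and for part (iii) it gives $2^s\left(1 + 2^{-(s-|S|)}\right)$. The final step is the one-line simplification $2^s \cdot 2^{-(s-|S|+k)} = 2^{|S|-k}$ applied with $k = 2, 1, 0$ respectively, yielding $2^s + 2^{|S|-2}$, $2^s + 2^{|S|-1}$, and $2^s + 2^{|S|}$, exactly as claimed.

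There is essentially no genuine obstacle here; the corollary is a repackaging of Proposition \ref{prop:fixeds} once one observes that $2^{\nu_S(K)}$ is constant on the subfamily. The only point that merits a word of care is that the subfamily $\{K : \nu_S(K) = s\}$ is nonempty and has positive density, so that the restricted average is well-defined. This follows from Theorem \ref{thm:cubicfldcount}: the condition $\nu_S(K) = s$ is cut out by prescribing the number of irreducible components $r_p$ of $K \otimes \QQ_p$ at each $p \in S$, and since $|S| \le s \le 3|S|$ there is always at least one admissible assignment with each $r_p \in \{1,2,3\}$ summing to $s$, each of which is realized with positive density by that theorem. Given this, the corollary reduces immediately to Proposition \ref{prop:fixeds} together with the constancy of $2^s$ on the subfamily.
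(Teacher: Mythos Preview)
Your proposal is correct and matches the paper's own proof, which simply records that the corollary follows immediately from Proposition~\ref{prop:fixeds}. You have written out explicitly the arithmetic that the paper leaves implicit, and your extra remark about positive density of the subfamily is a harmless (and accurate) amplification.
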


\begin{proof}
This follows immediately from Proposition \ref{prop:fixeds}.
\end{proof}

\subsection{Proof of Theorem  \ref{thm:DH quotients for Sel_2^S}}
We are finally able to calculate the average of $2^{\nu_S(K)}|\Cl(K)_S[2]|$. 
\begin{proposition}
\label{prop:classavg}
When ordered by absolute discriminant,
\begin{enumerate}[(i)]
\item the average of $2^{\nu_S(K)}|\Cl(K)_S[2]|$ as $K$ ranges over totally real $\ST$-cubic fields is equal to 
\linebreak
$\displaystyle{2^{|S|-2} + 2^{|S|} \prod_{p \in S} \left (2 - \frac{1}{p^2 + p + 1} \right )}$,
\item the average of $2^{\nu_S(K)}|\Cl(K)_S[2]|$ as $K$ ranges over comlex $\ST$-cubic fields is
\linebreak
$\displaystyle{2^{|S|-1} + 2^{|S|} \prod_{p \in S} \left (2 - \frac{1}{p^2 + p + 1} \right )}$, and
\item the average of $2^{\nu_S(K)}|\Cl(K)^+_S[2]|$ as $K$ ranges over totally real $\ST$-cubic fields is equal to 
\linebreak
$\displaystyle{2^{|S|} + 2^{|S|} \prod_{p \in S} \left (2 - \frac{1}{p^2 + p + 1} \right )}$.
\end{enumerate}
\end{proposition}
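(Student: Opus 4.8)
The plan is to compute each of the three averages by stratifying the family of cubic fields according to the value of $\nu_S(K)$ and then summing the contributions of the strata. Since each $r_p(K) \in \{1,2,3\}$, the quantity $\nu_S(K) = \sum_{p \in S} r_p(K)$ takes only finitely many values, all lying in the range $|S| \le s \le 3|S|$. Fixing a signature (say totally real, for part (i)), I would for each integer $s$ in this range let $P(s)$ denote the proportion of totally real $\ST$-cubic fields with $\nu_S(K) = s$. These proportions are well defined by Theorem \ref{thm:cubicfldcount}, since fixing the splitting type of $K \otimes \QQ_p$ for each $p \in S$ is a local condition of positive density; moreover, because every field has some value of $\nu_S(K)$ in the allowed range, $\sum_s P(s) = 1$.

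The key step is to decompose the defining sum over the finitely many strata. Writing $T(X)$ for the number of totally real $\ST$-cubic fields with $|\Disc(K)| < X$, I would split
\begin{equation*}
\sum_{\substack{K \text{ tot.\ real} \\ 0 < \Disc(K) < X}} 2^{\nu_S(K)} |\Cl(K)_S[2]| = \sum_{s = |S|}^{3|S|}\ \sum_{\substack{K \text{ tot.\ real},\ \nu_S(K) = s \\ 0 < \Disc(K) < X}} 2^{s} |\Cl(K)_S[2]|.
\end{equation*}
By Corollary \ref{cor:avsselnu}(i), the inner sum is asymptotic to $\left(2^{s} + 2^{|S|-2}\right)$ times the number of fields in that stratum, and that count is asymptotic to $P(s)\, T(X)$. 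Dividing by $T(X)$ and letting $X \to \infty$ gives
\begin{equation*}
\Avg\left(2^{\nu_S(K)} |\Cl(K)_S[2]|\right) = \sum_{s} \left(2^{s} + 2^{|S|-2}\right) P(s) = \sum_s 2^{s} P(s) + 2^{|S|-2} \sum_s P(s).
\end{equation*}

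To conclude, I would identify $\sum_s 2^{s} P(s)$ with $\Avg\left(2^{\nu_S(K)}\right)$, which Lemma \ref{lem:avglem} evaluates as $2^{|S|} \prod_{p \in S}\left(2 - \frac{1}{p^2 + p + 1}\right)$, while $\sum_s P(s) = 1$ leaves the additive constant $2^{|S|-2}$; this is exactly the formula in part (i). Parts (ii) and (iii) run identically, using the complex count $T(X)$ and Corollary \ref{cor:avsselnu}(ii) in case (ii) and Corollary \ref{cor:avsselnu}(iii) in case (iii), which replace the constant $2^{|S|-2}$ by $2^{|S|-1}$ and $2^{|S|}$ respectively. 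The only genuinely delicate point — and the step I expect to be the main obstacle — is justifying the interchange of the finite sum over strata with the limit defining the average: one must check that the $o(X)$ error terms contributed by the individual strata aggregate to an error that is $o(T(X))$. This is immediate here because there are only finitely many strata and each has size $\Theta(X)$ by the counting theorems, so no uniformity beyond what Corollary \ref{cor:avsselnu} already provides is required.
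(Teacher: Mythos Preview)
Your proof is correct and follows essentially the same approach as the paper: stratify by the value $s=\nu_S(K)$, apply Corollary~\ref{cor:avsselnu} on each stratum to get the weighted average $\sum_s (2^s + c)\,\rho(s)$ (with $c=2^{|S|-2},\,2^{|S|-1},\,2^{|S|}$ in the three cases), then use $\sum_s \rho(s)=1$ and Lemma~\ref{lem:avglem} to identify $\sum_s 2^s \rho(s)$ with the product. The paper's argument is the same up to notation (it writes $\rho(s)$ for your $P(s)$) and omits the remark about the interchange of limit and finite sum, which you correctly note is immediate.
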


\begin{proof}
We only prove part (i), since the proofs of parts (ii) and (iii) are nearly identical.  For each integer $s$ with $|S| \le s \le 3|S|$, define $\rho(s)$ to be the proportion of $\ST$-cubics $K$ such that $\nu_S(K) = s$. By Corollary \ref{cor:avsselnu}, we then have
\begin{multline}
\label{eq:avgseleq1}
\Avg\left (2^{\nu_S(K)}|\Cl(K)_S[2]|\right)
= \sum_{s = |S|}^{3|S|} \left ( 2^{s} + 2^{|S|-2}\right) \cdot \rho(s) 
= \sum_{s = |S|}^{3|S|} 2^{|S|-2} \cdot \rho(s) + \sum_{s = |S|}^{3|S|} 2^{s} \cdot \rho(s) 
\\
= 2^{|S|-2} + \sum_{s = |S|}^{3|S|} 2^{s} \cdot \rho(s)
= 2^{|S|-2} + 2^{|S|}\prod_{p \in S} \left (2 - \frac{1}{p^2 + p + 1} \right ),
\end{multline}
where the final equality follows from Lemma \ref{lem:avglem}.
\end{proof}

We are now able to prove Theorem \ref{thm:DH quotients for Sel_2^S}.
\begin{proof}[Proof of Theorem  \ref{thm:DH quotients for Sel_2^S}]
As noted in the beginning of this section, we have \linebreak $|\Sel_2^S(K)|\nobreak=\nobreak2^{\nu_S(K) + 3} |\Cl(\cO_K)_S[2]|$ if $K$ is totally real  and $|\Sel_2^S(K)| = 2^{\nu_S(K) + 2}\nobreak|\Cl(\cO_K)_S[2]|$ if $K$ is complex. The result then follows from Proposition \ref{prop:classavg}.
\end{proof}

\section{K-groups}
\label{sec:Kgroups}

We are able to use Theorem \ref{thm:DH quotients for Sel_2^S} to study the average size of $K_{2n}(\cO_K)[2]$ by appealing to the following result of Rognes and Weibel~\cite{RG}.

\begin{theorem}[Theorem 0.7 in \cite{RG}]
\label{thm:RG thm}
Let $K$ be a number field and set $S = \{2\}$. For even $n > 0$, the $2$-rank of $K_{2n}(\cO_K)[2]$ is given by
\begin{equation}
\dimF K_{2n}(\cO_K)[2] = \left \{ \begin{matrix}[ll] 
\dimF \Cl(K)_S[2]  + r_p - 1 &  n \equiv 0 \pmod{4} \\
\dimF \Cl(K)_S[2]  + r_1 + r_p - 1 & n \equiv 1 \pmod{4}\\
\dimF \Cl(K)^+_S[2]  + r_p - 1 &  n \equiv 2 \pmod{4}, n \equiv 3 \pmod{4}
\end{matrix} \right .
\end{equation}
where $r_p$ is the number of places above $2$ in $K$.
\end{theorem}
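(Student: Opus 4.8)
The final statement is Theorem 0.7 of Rognes--Weibel \cite{RG}, a result in $2$-primary algebraic $K$-theory that we invoke as input; accordingly I will sketch the strategy by which I would establish it rather than reprove it in detail. The foundation is the $2$-adic Quillen--Lichtenbaum comparison: after $2$-completion the algebraic $K$-theory of $\cO_K[1/2] = \cO_{K,S}$ (with $S = \{2\}$) agrees in the relevant range with its \'etale $K$-theory, so it is computed by a descent spectral sequence
$$
E_2^{s,t} = H^s_{\mathrm{et}}\big(\cO_K[1/2], \Z_2(t)\big) \;\Longrightarrow\; K_{2t-s}(\cO_K; \Z_2).
$$
The first step is to justify this comparison; it rests on Voevodsky's norm residue isomorphism (the Milnor conjecture in degree $2$) together with Thomason-style descent, and this is the deepest single input. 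Since $\cO_K[1/2]$ has $2$-cohomological dimension $2$ at the finite level, for $n>0$ the only contributing term for $K_{2n}$ is $s=2$, $t=n+1$ (the term $s=0,\,t=n$ vanishes because $H^0_{\mathrm{et}}(\Z_2(n)) = 0$ for $n \ge 1$). Hence, modulo the archimedean bookkeeping below,
$$
K_{2n}(\cO_K) \otimes \Z_2 \;\cong\; H^2_{\mathrm{et}}\big(\cO_K[1/2], \Z_2(n+1)\big),
$$
and $\dimF K_{2n}(\cO_K)[2]$ is the $\Ftwo$-rank of this finite $2$-group.

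Next I would compute that rank. The Bockstein sequence for $\Z_2(n+1) \xrightarrow{2} \Z_2(n+1) \to \Z/2$ expresses it via $\dimF H^2_{\mathrm{et}}(\cO_K[1/2], \Z/2)$ together with a correction from $H^3_{\mathrm{et}}(\cO_K[1/2], \Z_2(n+1))[2]$, the latter being exactly where the real places intervene. The mod $2$ group is handled by the Kummer sequence (recall $\mu_2 = \Z/2$ as Galois modules, so the twist is invisible mod $2$): it sits in
$$
0 \to \Cl(K)_S/2 \to H^2_{\mathrm{et}}\big(\cO_K[1/2], \Z/2\big) \to \mathrm{Br}\big(\cO_K[1/2]\big)[2] \to 0,
$$
where $\dimF \Cl(K)_S/2 = \dimF \Cl(K)_S[2]$ since $\Cl(K)_S$ is finite, and the Brauer term is the kernel of the sum-of-invariants map $\bigoplus_v \mathrm{Br}(K_v)[2] \to \Z/2$ with $v$ running over the $r_p$ dyadic and $r_1$ real places, each contributing a copy of $\Z/2$, hence of rank $r_p + r_1 - 1$. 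This already produces the shape $\dimF \Cl(K)_S[2] + r_p + (\text{real term}) - 1$ of the answer.

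The remaining and hardest point is to pin down precisely how the real and dyadic places enter as a function of $n \bmod 4$, which is the origin of the three cases and of the switch between $\Cl(K)_S$ and $\Cl(K)^+_S$. The correct target of the descent spectral sequence is not the naive \'etale cohomology of $\cO_K[1/2]$ (whose $2$-cohomological dimension is infinite because of the real places) but an archimedean-corrected version, and the real contribution is governed by the $2$-complete $K$-theory of $\R$, namely $KO$, alongside that of the dyadic local fields $K(\Q_2)^\wedge_2$. These are $8$-periodic, and for the even groups $K_{2n}$ this collapses to the stated $n \bmod 4$ dependence: via Poitou--Tate duality the relevant real-place contributions live in the $\pm$-eigenspaces of complex conjugation, so the parity of the twist $n+1$ toggles between the ordinary and narrow $S$-class groups and dictates whether the naive $r_1$ term survives (it does only for $n \equiv 1 \pmod 4$), while the finer distinction between $n \equiv 0$ and $n \equiv 2 \pmod 4$ — both with $n+1$ odd — reflects the difference between $\Z_2(i)$ for $i \equiv 0$ and $i \equiv 2 \pmod 4$ as seen at the real places. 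I expect this $8$-periodic local analysis, matching the homotopy of $KO$ and of $K(\Q_2)^\wedge_2$ against the global cohomology, to be the main obstacle; by comparison the class-field-theoretic mod $2$ computation above is routine.
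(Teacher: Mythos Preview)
The paper does not prove this statement at all: it is quoted verbatim as Theorem~0.7 of Rognes--Weibel~\cite{RG} and used as a black box to deduce Theorem~\ref{thm:K2thm} via Lemma~\ref{lem:avglemK}. You correctly recognize this in your opening sentence, so there is no ``paper's own proof'' to compare against; your sketch of the Quillen--Lichtenbaum/\'etale-descent strategy is a reasonable high-level summary of how~\cite{RG} actually proceeds, and is entirely extraneous to what the present paper requires.
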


To prove Theorem \ref{thm:K2thm}, we therefore want to calculate the average value of $2^{r_p}\cdot |\Cl(K)_S[2]|$. 

\pagebreak
\begin{lemma}
\label{lem:avglemK}
Let $S = \{2\}$. When ordered by absolute discriminant,
\begin{enumerate}[(i)]
\item the average value of $2^{r_p}|\Cl(K)_S[2]|$ as $K$ ranges over totally real $\ST$-cubic fields is $\frac{59}{14}$,
\item the average value of $2^{r_p}|\Cl(K)_S[2]|$ as $K$ ranges over complex $\ST$-cubic fields is $\frac{33}{7}$, and 
\item the average value of $2^{r_p}|\Cl(K)^+_S[2]|$ as $K$ ranges over totally real $\ST$-cubic fields $\frac{40}{7}$.
\end{enumerate}
\end{lemma}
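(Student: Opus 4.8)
The plan is to observe that this lemma is nothing more than Proposition \ref{prop:classavg} specialized to the single-prime set $S = \{2\}$, so that essentially no new work is required beyond an arithmetic substitution.

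First I would unwind the notation to line up the two statements. When $S = \{2\}$, the definition $\nu_S(K) = \sum_{p \in S} r_p(K)$ collapses to $\nu_S(K) = r_2(K)$, the number of irreducible components of $K \otimes \QQ_2$. Since $K$ is a cubic field, this count of components is exactly the number of places of $K$ lying above $2$, which is the quantity denoted $r_p$ in the statement of the lemma (matching the convention used in Theorem \ref{thm:RG thm}). Hence $2^{r_p} = 2^{\nu_S(K)}$ identically, and the three averages requested are literally the averages of $2^{\nu_S(K)}|\Cl(K)_S[2]|$ (totally real), $2^{\nu_S(K)}|\Cl(K)_S[2]|$ (complex), and $2^{\nu_S(K)}|\Cl(K)^+_S[2]|$ (narrow, totally real) computed in Proposition \ref{prop:classavg}.

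Next I would substitute $|S| = 1$ and $p = 2$ into the three formulas of Proposition \ref{prop:classavg}. Evaluating the local factor at $p = 2$ gives $2 - \frac{1}{p^2 + p + 1} = 2 - \frac{1}{7} = \frac{13}{7}$, so that $2^{|S|}\prod_{p \in S}\left(2 - \frac{1}{p^2+p+1}\right) = 2 \cdot \frac{13}{7} = \frac{26}{7}$. Part (i) then gives $2^{|S|-2} + \frac{26}{7} = \frac{1}{2} + \frac{26}{7} = \frac{59}{14}$; part (ii) gives $2^{|S|-1} + \frac{26}{7} = 1 + \frac{26}{7} = \frac{33}{7}$; and part (iii) gives $2^{|S|} + \frac{26}{7} = 2 + \frac{26}{7} = \frac{40}{7}$. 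These are precisely the three claimed values.

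There is no genuine obstacle here; the only point requiring care is the bookkeeping identification $r_p = \nu_{\{2\}}(K)$, which must be made explicit so that Proposition \ref{prop:classavg} can be invoked verbatim. Once that identification is in place, the lemma is immediate from the displayed arithmetic above, and I would present the proof as a one-line reduction to Proposition \ref{prop:classavg} followed by these three evaluations.
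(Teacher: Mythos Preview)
Your proposal is correct and matches the paper's own proof, which simply states that the lemma follows immediately from Proposition \ref{prop:classavg}. Your explicit identification $r_p = \nu_{\{2\}}(K)$ and the arithmetic evaluations are exactly the substitution the paper leaves to the reader.
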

\begin{proof}
This follows immediately from Proposition \ref{prop:classavg}.
\end{proof}
We are now able to prove Theorem \ref{thm:K2thm}.

\begin{proof}[Proof of Theorem \ref{thm:K2thm}]
Combine Theorem \ref{thm:RG thm} with Lemma \ref{lem:avglemK}.
 \end{proof}

\end{document}